\numberwithin{equation}{section}
\theoremstyle{plain}
\newtheorem{theorem}{Theorem}[section]
\newtheorem{proposition}[theorem]{Proposition}
\newtheorem{lemma}[theorem]{Lemma}
\theoremstyle{remark}
\newtheorem*{remark*}{Remark}
\newtheorem*{remarks*}{Remarks}
\theoremstyle{definition}
\newtheorem*{assumptionone*}{Assumption (H1)}
\newtheorem*{assumptiontwo*}{Assumption (H2)}
\newtheorem*{definition*}{Definition}
\newtheorem*{notation*}{Notation}
\newtheorem*{notations*}{Notations}
\providecommand{\BS}[1]{\boldsymbol{#1}}
\providecommand{\C}[1]{\mathcal{#1}}
\providecommand{\D}[1]{\mathbb{#1}}
\newcommand{\dd}{\mathrm{d}}
\newcommand{\eul}{\mathrm{e}}
\newcommand{\ii}{\mathrm{i}}
\DeclarePairedDelimiter\abs{\lvert}{\rvert}
\DeclarePairedDelimiter\accol{\lbrace}{\rbrace}
\DeclarePairedDelimiter\croch{\lbrack}{\rbrack}
\DeclarePairedDelimiter\norm{\lVert}{\rVert}
\DeclarePairedDelimiter\scal{\langle}{\rangle}
\DeclareMathOperator{\ord}{O}
\DeclareMathOperator{\vece}{e}
\begin{document}
%---------------------------------------------------------------------%
\title[Asymptotics of large eigenvalues for a class of band matrices]
{Asymptotics of large eigenvalues\\ for a class of band matrices}
%---------------------------------------------------------------------%
\author[A.~Boutet de Monvel]{Anne Boutet de Monvel$^1$}
\author[J.~Janas]{Jan Janas$^2$}
\author[L.~Zielinski]{Lech Zielinski$^3$}
%---------------------------------------------------------------------%
\address{$^1$Institut de Math\'ematiques de Jussieu\\ 
Universit\'e Paris Diderot Paris 7\\ 
175 rue du Chevaleret, 75013 Paris, France\\ 
E-mail: aboutet@math.jussieu.fr}
%---------------------------------------------------------------------%
\address{$^2$Instytut Matematyczny PAN\\ 
ul.~Sw.~Tomasza 30, 31-027 Krak\'ow, Poland\\ 
E-mail: najanas@cyf-kr.edu.pl} 
%---------------------------------------------------------------------%
\address{$^3$LMPA, Universit\'e du Littoral\\ 
50 Rue F. Buisson, B.P. 699, 62228 Calais, France\\
E-mail: Lech.Zielinski@lmpa.univ-littoral.fr}
%---------------------------------------------------------------------%
\subjclass[2010]{Primary: 47B36; Secondary: 47B25, 47A75, 47A10, 47A55}
\keywords{band matrix, Jacobi matrix, eigenvalues, asymptotics}
%---------------------------------------------------------------------%
\date{}
%---------------------------------------------------------------------%
\begin{abstract}
We investigate the asymptotic behaviour of large eigenvalues for a class of finite difference self-adjoint operators with compact resolvent in $l^2$.
\end{abstract} 
%---------------------------------------------------------------------%
\maketitle
%---------------------------------------------------------------------%
%:s.1. Introduction
%---------------------------------------------------------------------%
\section{Introduction} \label{sec:intro}
%---------------------------------------------------------------------%
%:s.1.1.
%---------------------------------------------------------------------%
\subsection{General remarks} \label{sec:11}
%---------------------------------------------------------------------%
Infinite tridiagonal tridiagonal matrices called ``Jacobi matrices'' have been investigated in many recent papers in relation with various questions of pure and applied mathematics (see \cites{BNS,JN,Tes,Tur,V}). In \cites{BNS,BZ,JN,M1,Z} the authors investigate Jacobi matrices acting in $l^2$ as unbounded self-adjoint operators with discrete spectrum and asymptotic formulas for large eigenvalues are given. This type of analysis is of particular interest in Quantum Physics when information about physical parameters can be deduced from the spectral asymptotics of a concrete model. 

Except for recent work \cite{M2} there are no corresponding work concerning the asymptotic analysis of large eigenvalues of higher order symmetric difference operators. This fact is not so surprising because higher order difference operators had not been studied from the spectral point of view, up to the last few years. On the other hand there are already works dealing with spectral properties of difference operators of higher order, see, for example \cites{BN1,BN2,Sh}. It is natural to ask if known results on tridiagonal matrices can be generalized to higher order difference operators and to look for applications. As an example of possible application let us mention here the problem of the behavior of large singular values for a non-symmetric Jacobi matrix $J$ with discrete spectrum. Indeed, these singular values are eigenvalues of $J^*J$ which is a symmetric difference operator of order four.

The aim of this paper is to obtain a simple remainder estimate in the asymptotics of eigenvalues for a large class of symmetric higher order difference operators. Applying this result to tridiagonal matrices,
\begin{enumerate}[(i)]
\item
we obtain asymptotic estimates of eigenvalues for Jacobi matrices which cannot be treated in \cites{BNS,BZ,JN,M1} (see Section~\ref{sec:13}),
\item
but with remainder estimates less precise than those of \cites{BNS,BZ,JN,M1}.
\end{enumerate} 
Since our assumptions are weaker than in \cites{BNS,BZ,JN,M1,M2}, we must overcome some additional difficulties, but the main idea of our approach remains the same as in \cite{BZ}. Although the approach of this paper is used to obtain the simplest remainder estimate, it is possible to follow the idea of \cite{BZ} in order to compute further terms of the asymptotics with smaller remainders under stronger ``conditions of smoothness'' imposed on the entries. It is an open question how to extend the methods of the present work to the case of non-smooth entries. Finally notice that we (and most of the above authors) have not studied asymptotic properties at infinity of the eigenvectors of $A$.
%---------------------------------------------------------------------%
%:s.1.2.
%---------------------------------------------------------------------%
\subsection{Main statement} \label{sec:12}
%---------------------------------------------------------------------%
Let $l^2=l^2(\D{N}^*)$ be the Hilbert space of square summable complex valued sequences $x\colon\D{N}^*\to\D{C}$ equipped with the norm
\[
\norm{x}_{l^2}=\biggl(\,\sum_{j=1}^{\infty}\abs{x(j)}^2\biggr)^{\! 1/2}.
\]
We fix $d\colon\D{N}^*\to\D{R}$ and introduce
\begin{equation} \label{11}
\C{D}\coloneqq\Bigl\{x\in l^2\colon\sum_{j=1}^{\infty} |d(j)x(j)|^2 <\infty\Bigr\}.
\end{equation}
Then we consider the self-adjoint operator $D\colon\C{D}\to l^2$ given by
the formula
\begin{equation} \label{12}
(Dx)(j)=d(j)x(j)\text{ for }x\in\C{D}
\end{equation}
and a finite difference operator $A'\colon\C{D}\to l^2$ of the form
\begin{equation} \label{13}
(A'x)(j)=\sum_{1\leq l\leq m}\left(a_l(j)x(j+l)+a_l(j-l)x(j-l)\right)
\end{equation}
where the coefficients $a_l\colon\D{N}^*\to\D{R}$, $l=1,\dots,m$ satisfy
\begin{equation} \label{14}
\frac{|a_l(j-l)|+|a_l(j)|}{d(j)}\xrightarrow[j\to\infty]{}0.
\end{equation}
We assume  $a_l(j-l)=0=x(j-l)$ when $j\leq l$ in \eqref{13}. We investigate the operator
\begin{equation} \label{15}
A=D+A'
\end{equation}
under the additional assumption
\begin{equation} \label{16}
d(n)\xrightarrow[n\to\infty]{}\infty,
\end{equation}
which ensures that $A$ has compact resolvent, hence there exists an orthonormal basis $(v_n)_{n\in\D{N}^*}$ such that $Av_n=\lambda_n(A)v_n$ holds for $n\in\D{N}^*$, $\lambda_n(A)\to\infty$ as $n\to\infty$ and $(\lambda_n(A))_{n\in\D{N}^*}$ is arranged increasingly, i.e., $\lambda_n(A)\leq\lambda_{n+1}(A)$ for any $n\in\D{N}^*$.

%-------------------%
\begin{assumptionone*}
The off-diagonal entries $a_l(n)$, $1\leq l\leq m$ satisfy the asymptotics
\begin{equation}\label{17}
a_l(n)=c_ln^{\delta_l}+\ord(n^{\delta-1})\text{ as }n\to\infty,
\end{equation}
where $\delta_l$, $c_l$, $l=1,\dots,m$ are some fixed real numbers and $\delta\geq\max\accol{\delta_1,\dots,\delta_m}$.
\end{assumptionone*}
%-------------------%

%-------------------%
\begin{assumptiontwo*}
The diagonal entries $d(n)$ satisfy the asymptotics
\begin{equation}     \label{18}
d(n)=c_0n^{\delta_0}+cn^{\delta_0-1}+\ord(n^{\delta_0-2})\text{ as }n\to\infty,
\end{equation}
where $\delta_0>0$, $c_0>0$ and $c\in\D{R}$ are fixed.
\end{assumptiontwo*}
%-------------------%

Our main result is the following

%-------------------%
%:thm.1.1
%-------------------%
\begin{theorem}    \label{thm:11}
Let $A=D+A'$ be defined by \eqref{11}--\eqref{16}. If both assumptions \emph{(H1), (H2)} hold and if $\kappa\coloneqq\delta_0-\delta>0$, then
\begin{equation}  \label{19}
\lambda_n(A)=d(n)+\ord(n^{\delta-\kappa})\text{ as }n\to\infty.
\end{equation}
\end{theorem}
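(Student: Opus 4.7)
The plan is to use the min-max principle for self-adjoint operators with compact resolvent, combined with a quasi-diagonalization in the spirit of \cite{BZ}. Assumption (H2) implies $d(n)$ is eventually strictly increasing to infinity, so $D$ has compact resolvent with $\lambda_n(D) = d(n)$ for large $n$. By \eqref{14}, $A'$ is infinitesimally $D$-bounded, so $A = D + A'$ is self-adjoint on $\C{D}$ with compact resolvent and its eigenvalues $\lambda_n(A)$ form an increasing sequence going to infinity.

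To obtain the sharp bound $\ord(n^{\delta - \kappa})$, I would introduce a formal skew-symmetric ``gauge'' operator $S$ satisfying $[D, S] = A'$, defined entry-wise by
\begin{equation*}
S(j, k) = \frac{a_{|j-k|}(\min(j,k))}{d(j) - d(k)} \quad \text{for } 1 \leq |j-k| \leq m.
\end{equation*}
Its entries are of order $\ord(n^{1-\kappa})$ by (H1), (H2). For each sufficiently large $k$ set $u_k = e_k - S e_k$; then $(A - d(k)) u_k = -A' S e_k$ is supported near $k$, and a direct computation gives
\begin{equation*}
\scal{A u_k, u_k} - d(k) \norm{u_k}^2 = -\sum_{l=1}^{m}\Bigl[\frac{a_l(k)^2}{d(k+l) - d(k)} - \frac{a_l(k-l)^2}{d(k) - d(k-l)}\Bigr] + \scal{A' S e_k, S e_k}.
\end{equation*}
The bracketed second-order sum is crucial: each of its two terms is individually of order $k^{1+\delta-\kappa}$, but by the sharp asymptotics (H1), (H2) the two contributions cancel to leading order, leaving only a residue of order $k^{\delta - \kappa}$. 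The last term $\scal{A' S e_k, S e_k}$ is controlled to the same order.

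For the upper bound, I would apply the min-max principle to the $n$-dimensional subspace $V_n = \operatorname{span}(u_k : k \leq n)$ (with $u_k = e_k$ for small $k$ where the construction would fail), combined with the monotonicity $d(k) \leq d(n)$ for $k \leq n$, to obtain $\lambda_n(A) \leq d(n) + \ord(n^{\delta-\kappa})$. The lower bound is proved by a symmetric argument on subspaces of codimension $n-1$.

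The main difficulty is making rigorous the manipulation with the possibly unbounded $S$: one must control the off-diagonal cross-terms $\scal{A u_j, u_k}$ for $|j - k| \leq 2m$, verify that the Gram matrix of $(u_k)_{k \leq n}$ is invertible so that $V_n$ really has dimension $n$, and carefully handle the transition near the ``boundary'' $k = n$ of the subspace. The delicate cancellation in the second-order sum is precisely where the remainders $\ord(n^{\delta-1})$ in (H1) and $\ord(n^{\delta_0-2})$ in (H2) are needed; without them the claimed bound would degrade.
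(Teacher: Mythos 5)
Your reduction $(A-d(k))u_k=-A'Se_k$ is correct, and the bracketed second-order sum does cancel to $\ord(k^{\delta-\kappa})$ exactly as you say; this is the same mechanism that powers the paper's estimates, where the remainders in (H1), (H2) enter through the discrete derivatives $\Delta a_l$ and $\Delta^2 d$. The genuine gap is the claim that $\scal{A'Se_k,Se_k}$ ``is controlled to the same order,'' and more fundamentally the use of the \emph{linearized} corrector $u_k=(I-S)e_k$. By (H1)--(H2) the nonzero entries of $Se_k$ have size $a_l(k)/(d(k\pm l)-d(k))=\ord(k^{1-\kappa})$, so $\norm{Se_k}^2=\ord(k^{2-2\kappa})$ and $\scal{A'Se_k,Se_k}=\ord(k^{\delta+2-2\kappa})$; there is no cancellation left in this term (using $A'=[D,S]$ and skew-symmetry of $S$ only gives $\scal{A'Se_k,Se_k}=2\scal{(D-d(k))Se_k,\,S^2e_k}$, which has the same size). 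The correction to the Rayleigh quotient is therefore $\ord(k^{\delta+2-2\kappa})/(1+\ord(k^{2-2\kappa}))$: for $\kappa\geq 2$ this is $\ord(k^{\delta-\kappa})$ and your scheme can be pushed through; for $1<\kappa<2$ it only yields $\ord(k^{\delta+2-2\kappa})$, weaker than claimed; and for $0<\kappa\leq 1$ --- precisely the new regime the theorem is after, cf.\ Section~\ref{sec:13}, comment (b) --- the vector $Se_k$ dominates $e_k$, the quotient degenerates to $d(k)+\ord(k^{\delta})$, and nothing beyond Theorem~\ref{thm:31} is obtained.

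The paper avoids this by conjugating with the full unitary $\eul^{\ii P_n}$, where $\ii P_n$ is a localized, finite-rank version of your $S$ (built from $a_l$ cut off at distance $\tau_n\sim n/4$ from $n$). Then $\lambda_n(A)=\lambda_n(B_n)$ exactly, no Gram matrix or normalization by $\norm{u_k}^2$ enters, and the error $\norm{B_n-(D+\tilde A_n)}$ is bounded by the \emph{commutators} $\norm{[P_n,\tilde A_n]}+\frac12\norm{[P_n,A_n]}$ (Lemma~\ref{lem:41}), whose matrix entries telescope into differences $\Delta a_l$, $\Delta p_{n,l}$, $\Delta^2 d$ (Lemma~\ref{lem:61}) and are thus $\ord(n^{\delta-\kappa})$ for every $\kappa>0$. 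One also needs the analogue of your ``boundary'' discussion, namely that $d(n)$ is exactly the $n$-th eigenvalue of the decoupled operator $D+\tilde A_n$ (Lemma~\ref{lem:42}). In short: you have the right corrector and the right cancellation in the second-order term, but the one-step Rayleigh--Ritz ansatz cannot reach $\kappa<2$; the corrector must be exponentiated (or iterated) and localized.
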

%-------------------%

%---------------------------------------------------------------------%
%:s.1.3.
%---------------------------------------------------------------------%
\subsection{Comments} \label{sec:13}
%---------------------------------------------------------------------%
%:s.1.3.1.
%---------------------------------------------------------------------%
\subsubsection*{a} 
If \eqref{17} is replaced by the weaker condition
\[
a_l(n)=\ord(n^{\delta})
\]
then the min-max principle allows us (see Theorem~\ref{thm:31}) to prove
\begin{equation} \label{110}
\lambda_n(A)=d(n)+\ord(n^{\delta})\text{ as }n\to\infty.
\end{equation}
The main purpose of Theorem~\ref{thm:11} is to show that it is possible to replace the estimate \eqref{110} by the improved estimate \eqref{19}.
%---------------------------------------------------------------------%
%:s.1.3.2.
%---------------------------------------------------------------------%
\subsubsection*{b} 
For any fixed $j\in\D{Z}$ the assumptions of Theorem \ref{thm:11} imply
\begin{equation}   \label{111}
\frac{a_l(n+j)}{d(n)}=\ord(n^{-\kappa})\text{ as }n\to\infty.
\end{equation}
We observe that the assertion of Theorem~\ref{thm:11} holds for any fixed $\kappa>0$ while all papers \cites{BNS,BZ,JN,M1,M2} assume $\kappa>1$.
%---------------------------------------------------------------------%
%:s.1.3.3.
%---------------------------------------------------------------------%
\subsubsection*{c} 
We observe that
\begin{equation}   \label{112}
d(n+1)-d(n)\sim\delta_0c_0n^{\delta_0-1}\text{ as }n\to\infty
\end{equation}
and we can treat the case $0<\delta_0<1$ when $d(n+1)-d(n)\to 0$ as $n\to\infty$, while the papers \cites{BNS,JN,M1,M2} assume that
\[
\liminf_{n\to\infty}\bigl(d(n+1)-d(n)\bigr)>0.
\]
%---------------------------------------------------------------------%
%:s.1.3.4.
%---------------------------------------------------------------------%
\subsubsection*{d} 
Theorem~\ref{thm:11} will be obtained as a special case of more general estimates described in Section~\ref{sec:5} (see Theorems~\ref{thm:51} and \ref{thm:52}). In Section~\ref{sec:54} we give asymptotic estimates of eigenvalues for some cases of not power-like entries.

%---------------------------------------------------------------------%
%:s.1.4.
%---------------------------------------------------------------------%
\subsection{Contents} \label{sec:14}
%---------------------------------------------------------------------%
In Section~\ref{sec:2} we check that the operator $A$ is well defined under assumption~\eqref{14} and its resolvent is compact under assumption \eqref{16}.

In Section~\ref{sec:3} we show how the min-max principle ensures the estimate \eqref{110} if \eqref{17} is replaced by the weaker condition $a_l(n)=\ord(n^{\delta})$.

In Section~\ref{sec:4} we present basic ingredients of our approach based on the construction of operators which are unitarily similar to $A$ with smaller off-diagonal entries. A similar idea is often used to investigate spectral asymptotics of self-adjoint problems defined by a linear PDE, e.g., in relation with the semi-classical approximation in Quantum Mechanics.

In Section~\ref{sec:51} we state a ``general estimate'' (Theorem~\ref{thm:51}). In Section~\ref{sec:52} we derive Theorem~\ref{thm:52} which is an application of the general estimate to power-like entries. In Section~\ref{sec:53} we easily check that Theorem~\ref{thm:11} is a special case of Theorem~\ref{thm:52} and in Section~\ref{sec:54} we apply we apply Theorem~\ref{thm:51} to cases where the entries have different asymptotic behaviors. 

Finally, we complete the proof of the general estimate in Section~\ref{sec:6}. 

The main result (Theorem~\ref{thm:11}) stated above derives from Theorem~\ref{thm:51} as follows:
\[
\left.\begin{array}{c}
\text{Theorem~\ref{thm:51}}\\
\text{Lemma~\ref{lem:42}}
\end{array}\!\!\right\}\implies\text{Theorem~\ref{thm:52}}\implies\text{Theorem~\ref{thm:11}}.
\]

%---------------------------------------------------------------------%
%:s.2.
%---------------------------------------------------------------------%
\section{The operator $A$} \label{sec:2}
%---------------------------------------------------------------------%

%-------------------%
\begin{proposition} \label{prop:21}
Let $A=A'+D$ be the operator defined by \eqref{11}-\eqref{13} and \eqref{15}.
\begin{enumerate}[\rm(i)]
\item
If \eqref{14} holds, then $A$ is self-adjoint.
\item
If moreover \eqref{16} holds, then $A$ has compact resolvent.
\end{enumerate}
\end{proposition}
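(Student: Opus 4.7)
My plan is to apply the Kato--Rellich theorem. The multiplication operator $D$ is trivially self-adjoint on $\C{D}$ (a real sequence on its maximal domain), so for (i) it suffices to check that $A'$ is symmetric on $\C{D}$ and infinitesimally $D$-bounded (i.e.\ has relative bound zero). Symmetry of $A'$ is an immediate consequence of the reality of the coefficients $a_l$ and the manifestly symmetric form of \eqref{13}: expanding $\scal{A'x, y}$ and substituting $j\mapsto j-l$ in the first piece and $j\mapsto j+l$ in the second (no boundary terms arise, thanks to the convention $a_l(j-l)=0$ for $j\leq l$) identifies the result with $\scal{x, A'y}$.

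For the relative bound, Cauchy--Schwarz applied to the $2m$-term sum defining $(A'x)(j)$ gives
\[
\abs{(A'x)(j)}^2 \leq 2m\sum_{l=1}^m \bigl(\abs{a_l(j)}^2\abs{x(j+l)}^2 + \abs{a_l(j-l)}^2\abs{x(j-l)}^2\bigr);
\]
summing over $j$ and reindexing via $k=j+l$ in the first piece and $k=j-l$ in the second puts this in the form $\norm{A'x}^2 \leq 2m\sum_k \varphi(k)\abs{x(k)}^2$ with
\[
\varphi(k) := \sum_{l=1}^m \bigl(\abs{a_l(k-l)}^2 + \abs{a_l(k)}^2\bigr).
\]
Assumption \eqref{14} forces $\varphi(k)/d(k)^2\to 0$, and splitting the sum at a threshold $N=N(\epsilon)$ chosen so that $2m\,\varphi(k)\leq \epsilon^2 d(k)^2$ for $k\geq N$ yields $\norm{A'x}\leq\epsilon\norm{Dx}+C_\epsilon\norm{x}$. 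This is the desired relative bound zero, and Kato--Rellich gives (i).

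For (ii), assumption \eqref{16} implies that the diagonal sequence $1/(d(j)-\ii)$ vanishes at infinity, so $(D-\ii)^{-1}$ is a norm-limit of finite-rank diagonal operators on $l^2$, hence compact; equivalently, the embedding $(\C{D},\norm{\cdot}_D)\hookrightarrow l^2$ is compact, where $\norm{x}_D^2 := \norm{x}^2 + \norm{Dx}^2$. Because the infinitesimal bound from step~(i) (used, say, with $\epsilon=1/2$) makes the $D$- and $A$-graph norms equivalent on $\C{D}$, the resolvent $(A-\ii)^{-1}\colon l^2\to(\C{D},\norm{\cdot}_D)$ is bounded; composing with the compact embedding into $l^2$ shows $(A-\ii)^{-1}$ is compact.

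The only nontrivial step is the Cauchy--Schwarz/reindexing computation producing the infinitesimal $D$-bound; everything else is standard perturbation theory. I do not expect any genuine obstacle, since \eqref{14} is tailor-made to yield this bound and \eqref{16} is tailor-made to make the diagonal inverse compact.
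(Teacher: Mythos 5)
Your proof is correct and follows essentially the same route as the paper: a pointwise weighted estimate $\norm{A'x}^2\leq\sum_k \varphi(k)\abs{x(k)}^2$ whose weights are $o(d(k)^2)$ by \eqref{14}, a split at a finite threshold to get relative bound zero, Kato--Rellich for (i), and compactness of $(D-\ii)^{-1}$ for (ii). The only (immaterial) divergence is in (ii), where you pass through equivalence of the $D$- and $A$-graph norms and the compact embedding of $(\C{D},\norm{\cdot}_D)$ into $l^2$, while the paper uses the second resolvent identity together with compactness of $A'(\ii+D)^{-1}$; both are standard and equally short.
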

%-------------------%

It is well known (see \cite{R-S}) that \eqref{15} defines a self-adjoint operator $\C{D} \to l^2$ provided $A'$ has zero relative bound with respect to $D$, i.e., if for any $\varepsilon>0$ there is $C_{\varepsilon}>0$ such that
\begin{equation} \label{21}
\norm{A'x}_{l^2}\leq\varepsilon\norm{Dx}_{l^2}+C_{\varepsilon}\norm{x}_{l^2}\,\text{ for } x\in\C{D}.
\end{equation} 

Before starting the proof of Proposition~\ref{prop:21} we introduce some notations. We recall that the scalar product in $l^2$ is defined by $\scal{x,y}=\sum_{k=1}^{\infty} \overline{x(k)}y(k)$ and we denote by $(\vece_n)_{n=1}^{\infty}$ the canonical basis of $l^2$, i.e., $\vece_n(j)=\delta_{j,n}$ where $\delta_{n,n}=1$ and $\delta_{j,n}=0$ for $j\neq n$.

Then we observe  (see \cite{R-S}) that  it suffices to show \eqref{21} for $x\in c_{00}$, where $c_{00}$ is the linear subspace of $l^2$ generated by the canonical basis, i.e.,
\begin{equation}
x\in c_{00} \Longleftrightarrow\#\{j:x(j)\neq 0\}<\infty.
\end{equation}
We denote by $\C{B}(l^2)$ the algebra of bounded linear operators on $l^2$ with the norm
\begin{equation}
\norm{T}=\sup_{\norm{x}_{l^2}\leq 1}\norm{Tx}_{l^2}.
\end{equation}
The shift operator $S\in\C{B}(l^2)$ is defined by
\begin{equation}
S\vece_n=\vece_{n+1}
\end{equation}
and for any $a\colon\D{N}^*\to\D{C}$ we denote by $a(\Lambda)$ the closed operator in $l^2$ given by
\begin{equation}
\left(a(\Lambda)x\right)(j)=a(j)x(j)\text{ for }x\in c_{00}.
\end{equation}
We can then rewrite the definition of $A'$ in the form
\begin{equation}
A' x=\sum_{1\leq l\leq m}\left(S^la_l(\Lambda)+a_l(\Lambda)S^{l*}\right)x
\end{equation}
where $x\in c_{00}$ and $S^{l*}$ is the adjoint of $S^l$.

%-------------------%

%-------------------%
\begin{proof}
(i) It suffices to show \eqref{21} for $x\in c_{00}$. For $l=1,\dots,m$ we denote
\begin{equation} \label{27} 
A_l'\coloneqq S^la_l(\Lambda)+a_l(\Lambda)S^{l*}.
\end{equation}
For arbitrary $x\in c_{00}$ we can write
\begin{equation} \label{28}
\norm{A_l'x}_{l^2}^2\leq\left(||S^la_l(\Lambda)x||_{l^2}+||a_l(\Lambda)S^{*l}x||_{l^2}\right)^2.
\end{equation}
Since $S^la_l(\Lambda)S^{*l}=a_l(\Lambda-l)$ with the convention that $a_l(j-l)=0$ if $j\leq l$ and $S^l$ is an isometry, the right-hand side of \eqref{28} can be estimated from above by
\begin{equation}
2||S^la_l(\Lambda)x||_{l^2}^2+2||a_l(\Lambda)S^{*l}x||_{l^2}^2=
2||a_l(\Lambda)x||_{l^2}^2+2||a_l(\Lambda-l)x||_{l^2}^2.
\end{equation}
Then taking $y=(\ii+D)x$ we obtain
\begin{equation}
||A_l'(\ii+D)^{-1}y||_{l^2}^2\leq\sum_j|b_l(j)y(j)|^2
\end{equation}
with
\begin{equation} \label{211}
b_l(j)={\left( 2\,\frac{a_l(j)^2+a_l(j-l)^2}{1+d(j)^2}\right) }^{\! 1/2}\xrightarrow[j\to\infty]{}0.
\end{equation}
For $N\in\D{N}^*$ we denote by ${\Pi}_N$ the orthogonal projection onto $\{ \vece_n\} {}_{1\leq n\leq N}$ and ${\Pi}'_N:=I-{\Pi}_N$. Then \eqref{211} 
implies $||A_l'(\ii+D)^{-1}{\Pi}'_N||\to 0$ as $N\to\infty$. 
Thus for a given $\varepsilon >0$ we can find 
$N(\varepsilon )\in \D{N}$ such that 
\[
||A_l'(\ii+D)^{-1}{\Pi}'_{N(\varepsilon )}y||_{l^2}\leq\varepsilon ||y||_{l^2} 
\]
and we deduce $||A_l' x||_{l^2}\le \varepsilon ||(D+\ii ){\Pi}'_{N(\varepsilon )}x||_{l^2}+||A_l'{\Pi}_{N(\varepsilon )}||\, ||x||_{l^2}$.  

(ii) The operator $A_l'(\ii+D)^{-1}$ is compact as limit of finite rank operators $A_l'(\ii+D)^{-1}{\Pi}_N$ in the norm of $\C{B}(l^2)$. Then $(\ii+A)^{-1}A_l'(\ii+D)^{-1}=(\ii+D)^{-1}-(\ii+A)^{-1}$ is compact and compactness of $(\ii+D)^{-1}$ (due to \eqref{16}) implies that $(\ii+A)^{-1}$ is compact.
\end{proof} 
%-------------------%

%---------------------------------------------------------------------%
%:s.3.
%---------------------------------------------------------------------%
\section{Asymptotics by min-max principle} \label{sec:3}
%---------------------------------------------------------------------%
%:s.3.1.
%---------------------------------------------------------------------%
\subsection{Statement} \label{sec:31}

In what follows for a sequence $x(n)$ we will use the notation
\begin{equation}   \label{31}
(\Delta x)(n)\coloneqq x(n+1)-x(n).
\end{equation}
The purpose of this section is to prove the following

%-------------------%
%:thm 3.1
%-------------------%
\begin{theorem}    \label{thm:31}
Let $A=D+A'$ be given by \eqref{11}-\eqref{16}. Assume moreover that there exist $C>0$, $\delta\in\D{R}$, $\kappa>0$ satisfying $\delta+\kappa>0$ and $n_0\in\D{N}$ such that
\begin{align} \label{2120}
&(\Delta d)(n)\geq C^{-1}n^{{\delta}+\kappa-1}\text{ for }n>n_0,
\shortintertext{and, for $l=1,\dots,m$,}
\label{33}
&a_l(n)=\ord(n^{\delta})\text{ as }n\to\infty.
\end{align}
Then one has the large $n$ asymptotic formula
\begin{equation}  \label{2130}
\lambda_n(A)=d(n)+\ord(n^{\delta}).
\end{equation}
\end{theorem}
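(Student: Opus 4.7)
The plan is to apply the Courant--Fischer--Weyl min-max principle to $A = D + A'$, using the bandwidth $m$ and the coefficient growth $a_l(n) = \ord(n^\delta)$ to control $A'$. As a preliminary reduction, \eqref{2120} makes $d$ strictly increasing for $n > n_0$ and \eqref{16} gives $d(n) \to \infty$, so for $n$ large enough we have $d(n) > \max_{j \leq n_0} d(j)$, whence $\max_{j \leq n} d(j) = d(n)$; since only finitely many diagonal entries fall out of the monotone order, for such $n$ one also has $\lambda_n(D) = d(n)$.

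For the upper bound, take $V_n \coloneqq \text{span}(\vece_1, \dots, \vece_n)$ and apply the min-max formula
\[
\lambda_n(A) \leq \sup_{x \in V_n,\ \norm{x}=1} \scal{Ax, x}.
\]
On $V_n$, $\scal{Dx, x} = \sum_{j \leq n} d(j)|x(j)|^2 \leq d(n)$, and the band structure of $A'$ (cf.~\eqref{13}) shows that the block $\Pi_{n+m} A' \Pi_n$ has entries $\ord(n^\delta)$ and hence operator norm $\ord(n^\delta)$. Thus $|\scal{A'x, x}| \leq \norm{A'x} = \ord(n^\delta)$, giving $\lambda_n(A) \leq d(n) + \ord(n^\delta)$.

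For the lower bound, apply the dual form of min-max with $W = V_{n-1}$: any $x \perp V_{n-1}$ satisfies $x(j) = 0$ for $j < n$, so $\scal{Dx, x} = \sum_{j \geq n} d(j)|x(j)|^2$. Expanding $A'$ from \eqref{13} and using the pointwise bound $2|a_l(j)\overline{x(j)}x(j+l)| \leq |a_l(j)|(|x(j)|^2 + |x(j+l)|^2)$ yields
\[
|\scal{A'x, x}| \leq \sum_{j \geq n} b(j)\, |x(j)|^2, \qquad b(j) \coloneqq \sum_{l=1}^m \bigl(|a_l(j)| + |a_l(j-l)|\bigr) = \ord(j^\delta).
\]
Hence $\scal{Ax, x} \geq \sum_{j \geq n}(d(j) - b(j))|x(j)|^2$, and the bound $\lambda_n(A) \geq d(n) - \ord(n^\delta)$ reduces to the uniform estimate
\[
d(j) - b(j) \geq d(n) - C'' n^\delta \qquad \text{for every } j \geq n,
\]
valid for some $C'' > 0$ and all sufficiently large $n$.

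The verification of this uniform estimate is the main technical step and the only place where the gap hypothesis \eqref{2120} intervenes. Writing $b(j) \leq C''' j^\delta$ and telescoping, $d(j) - d(n) \geq C^{-1}\sum_{k=n}^{j-1} k^{\delta+\kappa-1}$. For $j$ close to $n$ the right-hand side is of order $(j-n)\,n^{\delta+\kappa-1}$, which dominates $C'''(j^\delta - n^\delta) = \ord((j-n)\,n^{\delta-1})$ by a factor $n^\kappa \to \infty$; for $j \gg n$ the sum is of order $j^{\delta+\kappa}$, which dominates $C''' j^\delta$ by the same mechanism since $\kappa > 0$. Combining the upper and lower bounds yields the announced asymptotic \eqref{2130}.
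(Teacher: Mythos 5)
Your overall architecture is the paper's: Rayleigh--Ritz on $V_n=\mathrm{span}(\vece_1,\dots,\vece_n)$ and its dual form on $V_{n-1}^{\perp}$, combined with the pointwise bound $|\scal{A'x,x}|\leq\sum_j b(j)|x(j)|^2$ where $b(j)=\sum_l(|a_l(j)|+|a_l(j-l)|)$; this is exactly the content of Lemma~\ref{lem:32}, and your lower bound --- where you keep $d(j)-b(j)$ together and use \eqref{2120} to compare $d(j)-d(n)$ with $j^{\delta}-n^{\delta}$ --- is correct.

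The gap is in the upper bound, in the case $\delta<0$, which the theorem allows since only $\kappa>0$ and $\delta+\kappa>0$ are assumed. The claim that $\Pi_{n+m}A'\Pi_n$ has entries $\ord(n^{\delta})$ fails there: those entries are the $a_l(j)$ for $j\leq n$, and $a_l(j)=\ord(j^{\delta})$ with $\delta<0$ means the largest entries sit at small, fixed $j$ (for instance $a_l(1)$ is a fixed nonzero number while $n^{\delta}\to0$). Decoupling $\scal{Dx,x}\leq d(n)$ from $|\scal{A'x,x}|\leq\max_{j\leq n}b(j)$ therefore only yields $\lambda_n(A)\leq d(n)+\ord(1)=d(n)+\ord(n^{\max(\delta,0)})$, weaker than \eqref{2130} when $\delta<0$. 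The repair is to treat the upper bound exactly as you treated the lower one: for $x\in V_n$ bound $\scal{x,Ax}\leq\sup_{j\leq n}\bigl(d(j)+C_0j^{\delta}\bigr)$, and note that \eqref{2120} gives $|\Delta(C_0j^{\delta})|\asymp j^{\delta-1}=j^{-\kappa}\,j^{\delta+\kappa-1}\leq C C_0|\delta|\,j^{-\kappa}(\Delta d)(j)\leq(\Delta d)(j)$ for large $j$, so $j\mapsto d(j)+C_0j^{\delta}$ is eventually nondecreasing; since $d(n)\to\infty$, the supremum is attained at $j=n$ for all large $n$, giving $\lambda_n(A)\leq d(n)+C_0n^{\delta}$. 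This monotonicity condition $|(\Delta\alpha)(j)|\leq(\Delta d)(j)$ is precisely hypothesis \eqref{216} of the paper's Lemma~\ref{lem:32}, which is how the paper avoids the issue.
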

%-------------------%

We observe that due to \eqref{2120} there exists $c>0$ and $n_1$ such that
\begin{equation}   \label{35}
d(n)\geq cn^{\delta+\kappa}\text{ for }n>n_1
\end{equation}
and \eqref{33} with \eqref{35} imply \eqref{111}.

%---------------------------------------------------------------------%
%:s.3.2.
%---------------------------------------------------------------------%
\subsection{Auxiliary estimates} \label{sec:32}

%-------------------%
%:lem 3.2
%-------------------%
\begin{lemma}   \label{lem:32}
Assume that $\alpha\colon\D{N}^*\to\D{R}$ satisfying the two conditions
\begin{align}  \label{212}
&\alpha(j)\geq\sum_{1\leq l\leq m}(|a_l(j)|+|a_l(j-l)|),\\
\label{213}
&\frac{\alpha(j)}{d(j)}\to 0\text{ as }j\to\infty.
\end{align}
Then for every $n\in\D{N}^*$ the estimate
\begin{align}  \label{214}
&d^-_n \leq\lambda_n(A) \leq d^+_n
\shortintertext{holds with}
\label{eq:215}
&d^-_n\coloneqq\inf_{j\geq n}\accol{d(j)-\alpha(j)},\\
\label{eq:2155}
&d^+_n\coloneqq\sup_{j\leq n}\accol{d(j)+\alpha(j)}.
\end{align}
If moreover there exists $j_0\in\D{N}^*$ such that
\begin{equation} \label{216}
\abs*{(\Delta\alpha)(j)}\leq(\Delta d)(j)\text{ for } j\geq j_0,
\end{equation}
then there exists $n_1\in\D{N}^*$ such that
\begin{equation}  \label{2165}
\abs{\lambda_n(A)-d(n)}\leq\alpha(n)\text{ for }n\geq n_1.
\end{equation}
\end{lemma}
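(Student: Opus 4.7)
The plan splits naturally into the two parts of the statement: first the two-sided bound \eqref{214}, then the refinement \eqref{2165} under the extra hypothesis \eqref{216}.

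The key preliminary step is the quadratic-form sandwich
\[
|\langle x,A'x\rangle|\leq\sum_{j\geq 1}\alpha(j)|x(j)|^2\qquad(x\in c_{00}).
\]
To prove it, I would expand $\langle x,A'x\rangle$ using \eqref{13}, and bound each term by the elementary inequality $2|\overline{x(j)}x(k)|\leq|x(j)|^2+|x(k)|^2$. After a shift of the summation index, the contribution of the pair $a_l(j)\overline{x(j)}x(j+l)$ (and its symmetric counterpart $a_l(j-l)\overline{x(j)}x(j-l)$) lumps into a weight $|a_l(j)|+|a_l(j-l)|$ in front of $|x(j)|^2$; summing over $l$ and using \eqref{212} gives the claimed bound. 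Since $A'$ has zero $D$-bound, this extends from $c_{00}$ to $\C{D}$ by density. In operator form it reads $-\alpha(\Lambda)\leq A'\leq\alpha(\Lambda)$ as quadratic forms, hence
\[
\langle x,(D-\alpha(\Lambda))x\rangle\leq\langle x,Ax\rangle\leq\langle x,(D+\alpha(\Lambda))x\rangle.
\]

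I then apply the min-max principle (valid because $A$ is self-adjoint with compact resolvent, by Proposition~\ref{prop:21}). For the upper bound \eqref{214}, I test on $V_n\coloneqq\mathrm{span}(\vece_1,\dots,\vece_n)$: any unit $x\in V_n$ satisfies $\langle x,Ax\rangle\leq\sum_{j\leq n}(d(j)+\alpha(j))|x(j)|^2\leq d^+_n$, whence $\lambda_n(A)\leq d^+_n$. For the lower bound I use the dual formulation: for any unit $x$ in the range of $\Pi'_{n-1}=I-\Pi_{n-1}$, one has $\langle x,Ax\rangle\geq\inf_{j\geq n}(d(j)-\alpha(j))=d^-_n$, giving $\lambda_n(A)\geq d^-_n$.

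For the refinement \eqref{2165}, the hypothesis \eqref{216} is exactly the statement that $\Delta(d\pm\alpha)(j)\geq 0$ for $j\geq j_0$, so both $d-\alpha$ and $d+\alpha$ are non-decreasing on $[j_0,\infty)$. Combined with $d(n)\to\infty$ and $\alpha/d\to 0$ (so $d\pm\alpha\to\infty$), there is $n_1\geq j_0$ such that for $n\geq n_1$ one has $d(n)+\alpha(n)\geq\max_{j\leq j_0}(d(j)+\alpha(j))$; monotonicity then forces $d^+_n=d(n)+\alpha(n)$ and likewise $d^-_n=d(n)-\alpha(n)$. Substituting into \eqref{214} yields \eqref{2165}.

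The only step that requires care is the AM-GM bookkeeping in the preliminary inequality: one must shift indices correctly so that the contribution of every off-diagonal entry $a_l(j)$ lands on exactly the two diagonal weights $|a_l(j)|$ and $|a_l(j-l)|$ that appear in \eqref{212}. Once this identification is made cleanly, both \eqref{214} and \eqref{2165} follow immediately from the standard min-max argument and the monotonicity remark.
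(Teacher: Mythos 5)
Your proposal is correct and follows essentially the same route as the paper: the AM--GM bound $2|\overline{x(j)}x(j+l)|\leq|x(j)|^2+|x(j+l)|^2$ with an index shift to get the form sandwich $-\alpha(\Lambda)\leq A'\leq\alpha(\Lambda)$, the two-sided min-max test on $V_n$ and $V_{n-1}^{\perp}$, and the observation that \eqref{216} makes $d\pm\alpha$ eventually monotone so that $d^{\pm}_n=d(n)\pm\alpha(n)$ for large $n$. Your handling of the finitely many indices $j<j_0$ in the supremum defining $d^+_n$ (via $d+\alpha\to\infty$) is a detail the paper glosses over, but the argument is the same.
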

%-------------------%

%-------------------%
\begin{proof} 
Let $V_n$ denote the linear subspace generated by $\accol{\vece_j}_{1\leq j\leq n}$ and $V_n^{\perp}$ denote its orthogonal complement in $l^2$. Then suitable versions of the min-max principle give
\begin{equation} \label{217}
\inf_{\substack{x\in\C{D}\cap V_{n-1}^{\perp}\\
      \norm{x}_2\leq 1}}\scal{Ax,x}\leq\lambda_n(A)\leq\sup_{\substack{x\in V_n\\
      \norm{x}_2\leq 1}}\scal{x,Ax}.
\end{equation}
Let $A_l'$ be as in \eqref{27}. Then writing
\begin{equation}  \label{218}
\scal{x,A_l'x} =\sum_j  a_l(j)x(j+l){\overline {x(j)}}
 + \sum_k a_l(k-l) x(k-l){\overline {x(k)}}
\end{equation}
with $k=j+l$ we can estimate $|\scal{x,A_l'x} |$  by
\begin{equation}  \label{219}
  \sum_j 2|a_l(j)|\, |x(j+l)x(j)|\leq  \sum_j |a_l(j)|(|x(j+l)|^2+|x(j)|^2).
\end{equation}
Therefore the right-hand side of \eqref{219} can be written in the form
\begin{equation}  \label{220}
\sum_k |a_l(k)||x(k+l)|^2+\sum_j |a_l(j)||x(j)|^2 =\sum_j (|a_l(j-l)|+|a_l(j)|)|x(j)|^2
\end{equation}
and we obtain
\begin{equation} \label{221}
|\scal{x,A'x} |\leq\sum_j  \alpha(j) |x(j)|^2.
\end{equation}
 Next we observe that \eqref{221}  implies
\begin{equation} \label{222}
 \scal{x,\, (d(\Lambda)-\alpha(\Lambda))x} \leq\scal{x,\, Ax} \le
 \scal{x,\, (d(\Lambda)+\alpha(\Lambda))x}
\end{equation}
and using \eqref{222} we can estimate the right-hand side of \eqref{217} by
\[
\sup_{\substack{x\in V_n\\\norm{x}_2\leq 1}}\scal{x,(d(\Lambda)+\alpha(\Lambda))}=d_n^+.
\]
To complete the proof of \eqref{214} note that
\[
\inf_{\substack{x\in \C{D}\cap V_{n-1}^{\perp}\\\norm{x}_2\leq 1}}\scal{x,(d(\Lambda)-\alpha(\Lambda))x}=d_n^-
\]
is smaller than the left-hand side of \eqref{217}. In order to show \eqref{2165} we observe that \eqref{216} implies
\begin{equation} \label{226}
d(j)-\alpha(j) \leq d(j+1)-\alpha(j+1)   \text{ for } j\geq n_0,
\end{equation}
\begin{equation} \label{227}
d(j)+\alpha(j)\leq d(j+1)+\alpha(j+1)  \text{ for } j\geq n_0,
\end{equation}
and consequently $d^{\pm}_n=d(n)\pm \alpha(n)$ for $n\geq n_1$, hence \eqref{2165} follows from \eqref{214}.
\end{proof}
%-------------------%

%---------------------------------------------------------------------%
%:s.3.3.
%---------------------------------------------------------------------%
\subsection{Proof of Theorem~\ref{thm:31}} \label{sec:33}

%-------------------%
\begin{proof} 
Due to \eqref{110} there exists $C_0>0$ such that \eqref{212}, \eqref{213} hold with
\begin{equation} \label{224}
\alpha(j) \coloneqq C_0 j^{\delta}
\end{equation}
and \eqref{19} ensures the estimate
\begin{equation} \label{225}
\left|(\Delta\alpha)(j)\right|\sim |\delta| C_0j^{\delta -1}\leq\left|\delta\right|C_0 Cj^{-\kappa} (\Delta d)(j)\text{ for } j\geq j_0.
\end{equation}
Since $\left|\delta\right|C_0Cj^{-\kappa}\to 0$ as $j\to\infty$, it is clear that \eqref{225} implies \eqref{216} if $j_0$ is large enough. Thus \eqref{2165} holds with $\alpha(j)$ given by \eqref{224} and the proof of \eqref{112} is complete.
\end{proof} 
%-------------------%

%---------------------------------------------------------------------%
%:s.4.
%---------------------------------------------------------------------%
\section{Basic ingredients of the approach} \label{sec:4}
%---------------------------------------------------------------------%
%:s.4.1.
%---------------------------------------------------------------------%
\subsection{Main ideas} \label{sec:41}

We write the following formal development of the conjugate 
\begin{equation} \label{301}
B_n\coloneqq\eul^{-\ii P_n}A\eul^{\ii P_n}=A+[A,\ii P_n]+\frac{1}{2}[[A,\ii P_n],\ii P_n]+\dots 
\end{equation} 
where $P_n$ is self-adjoint and of finite rank for simplicity. Then $\lambda_n(A)=\lambda_n(B_n)$ and we want to determine $P_n$ so that $B_n$ is close to a diagonal operator at least for the entries with indices ranging between $n-\tau_n$ and $n+\tau_n$ where  
$(\tau_n)_{n=1}^{\infty}$ is a sequence of positive integers 
satisfying 
 \begin{align}   \label{3020}  
&\tau_n\leq\tau_{n+1}\text{ for }n\in\D{N}^*, 
 \\ \label{3021} &\tau_n \xrightarrow[n\to\infty]{} \infty, \\ 
\label{3022} &n-2\tau_n\xrightarrow[n\to\infty]{}  \infty.
\end{align}  
We remark that in the proof of Theorem~\ref{thm:11} we take  $\tau_n=\left\lfloor\frac{1}{4}\,n\right\rfloor$, where $\lfloor s\rfloor\coloneqq\max\{k\in\D{Z}:k\leq s\}$ means the integer part of $s$.

Further on $\chi\in C^1(\D{R})$ is a fixed function satisfying 
\begin{align*} 
&0\leq\chi\leq 1,\\
&\chi(s)=1\text{ for }s\in\croch{-1,1},\\
&\chi(s)=0\text{ for }s\notin\croch{-2,2}.
\end{align*}  
Then we write the decomposition
\begin{align} \label{3035}
&a_l(j)=a_{n,l}(j)+\tilde{a}_{n,l}(j)
\shortintertext{with}
\label{302}
&a_{n,l}(j)\coloneqq a_l(j)\,\chi\!\left(\frac{j-n}{\tau_n}\right),\\
\label{303}
&\tilde{a}_{n,l}(j)\coloneqq a_l(j)\,(1-\chi)\!\left(\frac{j-n}{\tau_n}\right)
\end{align} 
and the corresponding decomposition 
\begin{align} \label{304}
&A'=A_n+\tilde A_n,
\shortintertext{where}
\label{305}
&A_n=\sum_{1\leq l\leq m}\left(S^la_{n,l}(\Lambda)+a_{n,l}(\Lambda)S^{l*}\right),\\
\label{306}
&\tilde A_n=\sum_{1\leq l\leq m}\left(S^l\tilde{a}_{n,l}(\Lambda)+\tilde{a}_{n,l}(\Lambda)S^{l*}\right).  
\end{align} 
Using \eqref{304} we rewrite \eqref{301} in the form
\begin{equation} \label{307}
B_n=\eul^{-\ii P_n}A\eul^{\ii P_n}=D+\tilde A_n+A_n+[D,\ii P_n]+W_n,    
\end{equation} 
where $W_n$ is considered as a lower order error. However due to    
\begin{equation} \label{3075}
n-2 \tau_n\le j \le n+2\tau_n\implies{\tilde a}_{n,l}(j)=0
\end{equation} 
it is easy to see that for $n$ large enough we have $(D+{\tilde A}_n)\eul_n=D\eul_n=d(n)\eul_n$, i.e., $d(n)$ is an eigenvalue of $D+{\tilde A}_n$. Then in Section~\ref{sec:43}, Lemma~\ref{lem:42} we show that $d(n)$ is the $n$-th eigenvalue of $D+\tilde A_n$ provided $n$ is large enough and the entries are sufficiently regular. Next we choose $P_n$ satisfying the commutator equation 
\begin{equation} \label{309} 
A_n+\croch{D,\ii P_n}=0,
\end{equation} 
hence the expression \eqref{307} takes the form 
\[
B_n=\eul^{-\ii P_n}A\eul^{\ii P_n}=D+\tilde A_n+W_n     
\]
and using $\lambda_n(A)=\lambda_n(B_n)$, $d(n)=\lambda_n(D+\tilde A_n)$ with the min-max principle we obtain   
\begin{equation} \label{3011} 
\abs{\lambda_n(A)-d(n)}=\abs{\lambda_n(B_n)-\lambda_n(D+\tilde A_n)}\leq\norm{B_n-(D+\tilde A_n)} 
\end{equation} 
for $n>{\tilde n}_0$. 

%-------------------%
%:lem 4.1
%-------------------%
\begin{lemma}   \label{lem:41}
Let $P_n$ be a finite rank self-adjoint operator satisfying $A_n=\ii\croch{P_n,D}$. If $A'=A_n+\tilde A_n$ and $B_n=\eul^{-\ii P_n}A\eul^{\ii P_n}$, then
\begin{equation} \label{321}
\norm{B_n-(D+\tilde A_n)}\leq\norm{\croch{P_n,\tilde A_n}}+\frac{1}{2}\norm{\croch{P_n,A_n}}.
\end{equation}
\end{lemma}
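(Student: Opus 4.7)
The plan is to split $B_n-(D+\tilde A_n)$ into two parts for which the commutator structure gives independent bounds. Writing $A=(D+A_n)+\tilde A_n$ and applying the triangle inequality, one reduces matters to estimating
\[
\eul^{-\ii P_n}(D+A_n)\eul^{\ii P_n}-D
\qquad\text{and}\qquad
\eul^{-\ii P_n}\tilde A_n\eul^{\ii P_n}-\tilde A_n
\]
separately.

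For the second piece, I would set $g(t):=\eul^{-\ii tP_n}\tilde A_n\eul^{\ii tP_n}$, so that $g(0)=\tilde A_n$ and $g(1)=\eul^{-\ii P_n}\tilde A_n\eul^{\ii P_n}$. Differentiation gives $g'(t)=-\ii\,\eul^{-\ii tP_n}\croch{P_n,\tilde A_n}\eul^{\ii tP_n}$, and since $\eul^{\pm\ii tP_n}$ are unitary, integrating from $0$ to $1$ yields $\norm{g(1)-g(0)}\le\norm{\croch{P_n,\tilde A_n}}$, which is the first term on the right-hand side of \eqref{321}.

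The first piece is where the commutator equation $A_n=\ii\croch{P_n,D}$ is used. I would introduce the interpolating family $f(t):=\eul^{-\ii tP_n}(D+tA_n)\eul^{\ii tP_n}$ for $t\in\croch{0,1}$, so that $f(0)=D$ and $f(1)=\eul^{-\ii P_n}(D+A_n)\eul^{\ii P_n}$. A direct differentiation yields
\[
f'(t)=\eul^{-\ii tP_n}\bigl(A_n+\ii\croch{D,P_n}+t\ii\croch{A_n,P_n}\bigr)\eul^{\ii tP_n},
\]
and because $\ii\croch{D,P_n}=-A_n$ the first two terms cancel, leaving $f'(t)=-\ii t\,\eul^{-\ii tP_n}\croch{P_n,A_n}\eul^{\ii tP_n}$. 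Integrating in $t$ and using unitarity once more gives $\norm{f(1)-f(0)}\le\tfrac12\norm{\croch{P_n,A_n}}$. Adding the two bounds produces \eqref{321}.

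The only subtle point is the choice of interpolating family $D+tA_n$: a naive Duhamel expansion of $\eul^{-\ii P_n}A\eul^{\ii P_n}$ around $t=0$ would recover the leading contribution but with a worse constant and a messier remainder. Here the interpolation is tailored precisely so that the commutator equation \eqref{309} kills the linear-in-$P_n$ part of $f'(t)$ and leaves only the second-order commutator $\croch{P_n,A_n}$, with the prefactor $t$ responsible for the $\tfrac12$ after integration. Beyond that there is no real obstacle, only bookkeeping of signs and factors of $\ii$.
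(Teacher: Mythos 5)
Your proof is correct and takes essentially the same route as the paper: the paper's interpolating family $G_n(s)=\eul^{-\ii sP_n}(D+\ii\croch{sP_n,D})\eul^{\ii sP_n}-D$ is exactly your $f(s)-D$, since $\ii\croch{sP_n,D}=sA_n$ by the commutator equation, and your treatment of the $\tilde A_n$ piece coincides with the paper's bound on $\tilde B_n$. The only cosmetic difference is that the paper states the second bound as $\tfrac12\norm{\croch{\croch{D,P_n},P_n}}$, which equals $\tfrac12\norm{\croch{P_n,A_n}}$ because $\croch{D,P_n}=\ii A_n$.
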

%-------------------%

All our results will follow from suitable estimates of the right-hand side of \eqref{321}, i.e., estimates of norms of commutators. A general estimate is stated in Section~\ref{sec:5} and the norms of commutators from the right-hand side of \eqref{321} are estimated in Section~\ref{sec:6}.

%-------------------%
\begin{proof}
We introduce
\begin{equation} \label{324}
\tilde{B}_n\coloneqq\eul^{-\ii P_n}\tilde A_n\eul^{\ii P_n}-\tilde A_n
\end{equation}
and we observe that
\[
\tilde{B}_n=\int_0^1\frac{\dd}{\dd s}\left(\eul^{-\ii sP_n}\tilde A_n\eul^{\ii sP_n}\right)\dd s=\int_0^1\eul^{-\ii sP_n}\ii[\tilde A_n, P_n]\eul^{\ii sP_n}\dd s.
\]
Since for $s\in\D{R}$ the operators $\eul^{\ii sP_n}$ are unitary, $\norm{\eul^{\ii sP_n}}=1$ and we find
\begin{equation} \label{327}
\norm{\tilde{B}_n}\leq \norm{[\tilde A_n, P_n]}.
\end{equation}
Next for $s\in\D{R}$ we introduce
\begin{equation} \label{325}
G_n(s)\coloneqq\eul^{-\ii sP_n}(D+\ii[sP_n,D])\eul^{\ii sP_n}-D 
\end{equation} 
and we observe that
\begin{align*}
\frac{\dd}{\dd s}G_n(s)
&=\eul^{-\ii sP_n}\bigl(\ii\croch*{D+\ii\croch{sP_n,D},P_n}+\ii\croch{P_n,D}\bigr)\eul^{\ii sP_n}\\
&=\eul^{-\ii sP_n}s\croch*{\croch{D,P_n},P_n}\eul^{\ii sP_n}.
\end{align*}
Therefore
\[
G_n(1)=\int_0^1\eul^{-\ii sP_n}s\croch*{\croch{D,P_n},P_n}\eul^{\ii sP_n}\dd s
\]
and we can estimate
\begin{equation} \label{328}
\norm{G_n(1)}\leq\int_0^1 s\,\norm{\croch*{\croch{D,P_n},P_n}}\dd s=\frac{1}{2}\norm{\croch*{\croch{D,P_n},P_n}}.
\end{equation}
However 
\[
B_n =\eul^{-\ii P_n}(D+A_n)\eul^{\ii P_n}+\eul^{-\ii P_n}\tilde A_n\eul^{\ii P_n}=(G_n(1)+D)+(\tilde{B}_n+\tilde A_n),
\] 
hence   
\begin{equation} \label{329}
||B_n-(D+\tilde A_n)||=||\tilde{B}_n+G_n(1)||\leq ||\tilde{B}_n||+||G_n(1)||.
\end{equation} 
To complete the proof it remains to estimate the right-hand side of  \eqref{329} 
using \eqref{327}-\eqref{328}. 
\end{proof}
%-------------------%

%---------------------------------------------------------------------%
%:s.4.3.
%---------------------------------------------------------------------%
\subsection{Equality $\BS{d(n)=\lambda_n(D+\tilde A_n)}$} \label{sec:43}

In this section we give sufficient conditions to ensure the equality $d(n)=\lambda_n(D+\tilde A_n)$ used in estimate \eqref{3011}. 

We consider a sequence of positive integers $(\tau_n)_{n=1}^{\infty}$ satisfying \eqref{3020}-\eqref{3022} and $n_0>0$. 
We assume that the inequalities 
\begin{align} \label{40}
&d(n)<d(n+1), \\
\label{41}
&4m\max \{ |a_l(n)|,\, |a_l(n+m-\tau_n)| \}  
   \le d(n)-d(n+m-\tau_n ) 
\end{align} 
hold for $n\ge n_0$ and $l=1,\dots ,m$. 
%-------------------%
%:lem 4.2
%-------------------%
\begin{lemma}   \label{lem:42} 
Assume that \eqref{40}, \eqref{41} hold for $n\ge n_0$. If ${\tilde A}_n$ is defined by means of ${\tilde a}_{n,l}$ and $\chi$ as in Section~\ref{sec:41}, then there is ${\tilde n}_0\in \D{N}$ such that 
\begin{equation}     \label{308}
d(n)=\lambda_n(D+\tilde A_n)\text{ for }n> {\tilde n}_0.
\end{equation}
\end{lemma}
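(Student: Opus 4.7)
The idea is to exhibit a direct-sum decomposition $D + \tilde{A}_n = T_- \oplus T_W \oplus T_+$ in which $T_W = D|_W$ is diagonal and contains $d(n)$ as one of its eigenvalues, and then to show that $\spec(T_-)$ lies strictly below $d(n)$ while $\spec(T_+)$ lies strictly above, so that $d(n)$ ends up at exactly rank $n$ in the global ordered spectrum.

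\textbf{Decoupling.} Put
\begin{align*}
V_- &\coloneqq \text{span}\{\vece_j : 1 \leq j \leq n - \tau_n + m - 1\},\\
W   &\coloneqq \text{span}\{\vece_j : n - \tau_n + m \leq j \leq n + \tau_n\},\\
V_+ &\coloneqq \overline{\text{span}}\{\vece_j : j \geq n + \tau_n + 1\}.
\end{align*}
Since $\chi \equiv 1$ on $[-1,1]$, one has $\tilde a_{n,l}(j) = 0$ whenever $|j - n| \leq \tau_n$. A case analysis on the matrix entries of $\tilde A_n$ then shows that, provided $\tau_n \geq m$, no nonzero entry of $\tilde A_n$ connects indices from two distinct blocks among $V_-, W, V_+$; hence the direct-sum decomposition holds. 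In particular $T_W$ is diagonal with eigenvalues $d(n - \tau_n + m), \ldots, d(n + \tau_n)$, mutually distinct by \eqref{40}, and $d(n)$ sits at rank $\tau_n - m + 1$ within them.

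\textbf{Outer blocks.} By the quadratic-form bound \eqref{221} and the min-max principle applied to $T_\pm$, the claim reduces to
\[
\max_{j \leq n - \tau_n + m - 1}\bigl(d(j) + \tilde\alpha(j)\bigr) < d(n) < \min_{j \geq n + \tau_n + 1}\bigl(d(j) - \tilde\alpha(j)\bigr)
\]
for $n$ large, where $\tilde\alpha(j) \coloneqq \sum_{l=1}^m\bigl(|\tilde a_{n,l}(j)| + |\tilde a_{n,l}(j-l)|\bigr) \leq \sum_l\bigl(|a_l(j)| + |a_l(j-l)|\bigr)$. I would apply \eqref{41} not only at $n' = n$ but at every admissible $n' \in [n_0, n]$: because by \eqref{3020} the map $k \mapsto k + m - \tau_k$ is nondecreasing, for each $j$ in the indicated range one can select $n'(j) \leq n$ such that \eqref{41} at $n'(j)$ controls $|a_l(j)|$ by $(d(n) - d(j))/(4m)$, and analogously for $|a_l(j-l)|$; summing over $l$ yields $\tilde\alpha(j) \leq (d(n) - d(j))/2 < d(n) - d(j)$. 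A finite initial range of small indices is absorbed for $n$ large, since $d(n) \to \infty$ by \eqref{16}. The argument for $T_+$ is symmetric, and counting then gives $\dim V_- + (\tau_n - m) = n - 1$ eigenvalues of $D + \tilde A_n$ strictly below $d(n)$, hence $\lambda_n(D + \tilde A_n) = d(n)$.

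\textbf{Main obstacle.} The hard part is the bookkeeping in the outer-block estimate: \eqref{41} at a single $n'$ only controls $|a_l|$ at the isolated pair $(n', n' + m - \tau_{n'})$, so lifting it to a uniform pointwise bound on $\tilde\alpha$ across the full range $\{1, \ldots, n - \tau_n + m - 1\}$ requires delicate index management exploiting the hypothesis at all $n' \geq n_0$ together with the monotonicity and growth properties \eqref{3020}--\eqref{3022} of $(\tau_n)$.
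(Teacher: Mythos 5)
Your architecture is sound and is essentially the paper's argument in different packaging: the paper does not introduce the blocks $V_-$, $W$, $V_+$ explicitly, but instead feeds the weight $\tilde\alpha_n(j)=\sum_{l}(|\tilde a_{n,l}(j)|+|\tilde a_{n,l}(j-l)|)$ into the min--max sandwich of Lemma~\ref{lem:32}, getting $\inf_{j\ge n}\{d(j)-\tilde\alpha_n(j)\}\le\lambda_n(D+\tilde A_n)\le\sup_{j\le n}\{d(j)+\tilde\alpha_n(j)\}$ and then showing both bounds equal $d(n)$. Your decoupling is correct (the entries of $\tilde A_n$ joining distinct blocks all vanish because $\tilde a_{n,l}(j)=0$ for $|j-n|\le\tau_n$), your count $\dim V_-+(\tau_n-m)=n-1$ is right, and in either formulation everything reduces to the same pointwise inequalities, namely \eqref{4181}--\eqref{4182}: $\tilde\alpha_n(j)\le d(n)-d(j)$ to the left of the window and $\tilde\alpha_n(j)\le d(j)-d(n)$ to the right.

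The genuine gap is in your derivation of precisely these inequalities. Hypothesis \eqref{41} at an index $n'$ controls $|a_l|$ only at the two points $n'$ and $n'+m-\tau_{n'}$, and your mechanism for reaching an arbitrary $j$ is to ``select $n'(j)\le n$'' with $n'(j)+m-\tau_{n'(j)}=j$, justified by the claim that $k\mapsto k+m-\tau_k$ is nondecreasing ``by \eqref{3020}''. That claim is false: \eqref{3020} only gives $\tau_k\le\tau_{k+1}$, so $\tau_{k+1}-\tau_k$ may be $\ge 2$, in which case $k+m-\tau_k$ decreases and skips integers; for such $j$ no admissible $n'(j)$ exists and the selection breaks down. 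Even when $n'(j)$ exists you must still verify $n'(j)\le n$ (so that $d(n'(j))\le d(n)$), which again requires comparing $\tau_{n'(j)}$ with $\tau_n$ and not just monotonicity of $\tau$. Since you yourself flag this bookkeeping as ``the main obstacle'' and do not carry it out, the proof is missing its substantive step: everything before it is soft (block structure, form bound \eqref{221}, counting), and the lemma's content is exactly the passage from \eqref{41} to the pointwise bound on $\tilde\alpha_n$. For reference, the paper gets \eqref{4181}--\eqref{4182} by substituting the shifted indices $n'=j-i$ (right of the window, where $|a_l(j-i)|$ occurs as the first argument of the max in \eqref{41}) and $n'=j-i+\tau_n$ (left of the window, where it is meant to occur as the second argument), then invoking \eqref{40} and the monotonicity of $(\tau_n)$; any complete write-up must perform an index analysis of this kind rather than postulate surjectivity of $k\mapsto k+m-\tau_k$.
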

%-------------------%

%-------------------%
\begin{proof}
We introduce
\begin{equation} \label{414}
\tilde\alpha_n(j):=\sum_{1\leq l\leq m}(|\tilde a_{n,l}(j)|+|\tilde a_{n,l}(j-l)|)
\end{equation}
and observe that Lemma~\ref{lem:32} allows us to estimate
\begin{align} \label{415}
&{\tilde d}_n^-\leq\lambda_n(D+{\tilde A}_n)\leq{\tilde d}^+_n
\shortintertext{with}
\label{416}
&{\tilde d}_n^-=\inf_{j\geq n}\accol{d(j)-\tilde\alpha(j)},\\
\label{417}
&{\tilde d}_n^+=\sup_{j\leq n}\accol{d(j)+\tilde\alpha(j)}.
\end{align} 

\emph{First step}. We claim that 
\begin{equation} \label{4181}
j\ge n+\tau_n\implies{\tilde a}_n(j)\le d(j)-d(n) 
\end{equation} 
holds for $n\ge n_0+m$. Indeed, replacing $n$ by $j-i$ in \eqref{41} we obtain  
\begin{equation} \label{4191}
j-i\ge n_0\implies  4m|a_l(j-i)|\le d(j-i)-d(j-i+m-\tau_{j-i}), 
\end{equation}
hence for $0\le i\le m$, $j\ge n+\tau_n$ we have $j-i+m-\tau_{j-i}\ge j-\tau_n\ge n$ and applying \eqref{40} we find 
\begin{equation} \label{4201}
j\ge n+\tau_n\implies d(j-i)-d(j-i+m-\tau_{j-i})\le d(j)-d(n) 
\end{equation}
for $0\le i\le m$, $n\ge n_0+m$. Combining $|{\tilde a}_{n,l}(j)|\le |a_l(j)|$  with \eqref{4191} and \eqref{4201} we obtain \eqref{4181}.

\emph{Second step}. We claim that 
\begin{equation} \label{4182}
 n_0+m\le j\le n-\tau_n\implies{\tilde a}_n(j)\le d(n)-d(j).
\end{equation} 
Indeed, replacing $n$ by $j-i+\tau_n$ in \eqref{41} with $j\ge n_0+i$ we find
\begin{equation} \label{4192}
4m|a_l(j-i)|\le d(j-i+\tau_{j-i+\tau_n})-d(j-i+m), 
\end{equation}
hence for $0\le i\le m$, $j\le n-\tau_n$ we have $j-i+\tau_{j-i+\tau_n}\le j+\tau_n\le n$ and applying \eqref{40}  we find  
\begin{equation} \label{4202}
 n_0+m\le j\le n-\tau_n\implies  
d(j-i+\tau_{j-i+\tau_n})-d(j-i+m)\le d(n)-d(j) 
\end{equation}
for $0\le i\le m$. As before, \eqref{4182} follows from \eqref{4192} and \eqref{4202}. 

Next we observe that by definition $n-\tau_n\le j\le n+\tau_n\implies{\tilde a}_n(j)=0$, hence 
\begin{align} \label{450}
n_0+m\le j\le n&\implies {\tilde a}_n(j)\le d(n)-d(j)\implies d(j)+{\tilde\alpha}_n(j)\le d(n),\\   
\label{451}
n_0+m\le n\le j&\implies{\tilde a}_n(j)\le d(j)-d(n)\implies d(n)\le d(j)-{\tilde\alpha}_n(j). 
\end{align}
Therefore one can choose ${\tilde n}_0$ large enough to ensure ${\tilde d}^-_n=d(n)={\tilde d}^+_n$ for $n\ge {\tilde n}_0$ and \eqref{308} follows from \eqref{415}.
\end{proof}
%-------------------%

%---------------------------------------------------------------------%
%:s.5.
%---------------------------------------------------------------------%
\section{A general estimate} \label{sec:5}
%---------------------------------------------------------------------%
%:s.5.1.
%---------------------------------------------------------------------%
\subsection{Statement} \label{sec:51}

We fix a sequence of positive integers $(\tau_n)_{n=1}^{\infty}$ satisfying \eqref{3020}-\eqref{3022} and for $s\ge 0$ we denote 
\begin{align}  \label{44}
\alpha_s(j)&\coloneqq\max_{\substack{1\leq l\leq m\\|i|\leq s}}|a_l(j+i)|,\\
\label{46}
\tilde\alpha_{n,s}(j)&\coloneqq\frac{2\alpha_s(j)}{\tau_n}+
\max_{\substack{1\leq l\leq m\\|i|\leq s}}|(\Delta a_l)(j+i)|,\\
\label{43}
\gamma_s(j)&\coloneqq\min_{|i|\leq s+1}(\Delta d)(j+i),\\
\label{45}
\tilde\gamma_s(j)&\coloneqq\max_{|i|\leq s}|({\Delta}^2 d)(j+i)|,
\end{align} 
where $(\Delta^2d)(n)=(\Delta d)(n+1)-(\Delta d)(n)=d(n+2)-2d(n+1)+d(n)$.

%-------------------%
%:thm 5.1
%-------------------%
\begin{theorem}[general estimate]   \label{thm:51}
Let $A$ be defined by \eqref{11}-\eqref{16}. Let $(\tau_n)_{n=1}^{\infty}$,    $\alpha_s$, $\gamma_s$, $\tilde\alpha_{n,s}$, $\tilde\gamma_s$ be as above. If \eqref{308} holds and
\begin{equation} \label{47}
\rho_n(j)\coloneqq 2\tilde\alpha_{n,4m}(j)
\frac{\alpha_{4m}(j)}{\gamma_{4m}(j)}+
m\tilde\gamma_{5m}(j)\frac{\alpha_{4m}(j)^2}{\gamma_{4m}(j)^2},
\end{equation}
then there is $n_1$ such that for $n\geq n_1$ one has the estimate
\begin{equation} \label{48}
|\lambda_n(A)-d(n)|\leq 15 m^3\sup_{|i|\le 2\tau_n +4m}{\rho}_n(n+i).
\end{equation}
\end{theorem}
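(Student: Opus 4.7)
The plan is to execute the strategy of Section~\ref{sec:41}: to construct an explicit finite-rank self-adjoint $P_n$ satisfying \eqref{309}, to bound the two commutator norms appearing in Lemma~\ref{lem:41}, and to combine these bounds with \eqref{3011} and the hypothesis \eqref{308}.

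First I would set
$$P_n\coloneqq-\ii\sum_{l=1}^m\bigl(S^l q_{n,l}(\Lambda)-q_{n,l}(\Lambda)S^{l*}\bigr),\qquad q_{n,l}(j)\coloneqq\frac{a_{n,l}(j)}{d(j)-d(j+l)}.$$
Using the shift relation $[D,S^l f(\Lambda)]=S^l((d(\cdot+l)-d)f)(\Lambda)$, one verifies that $P_n$ is self-adjoint, has finite rank (each $q_{n,l}$ inherits the support $\{|j-n|\le 2\tau_n\}$ of $a_{n,l}$), and satisfies $A_n+[D,\ii P_n]=0$ once $d(j+l)>d(j)$ on that support, which holds for large $n$ by \eqref{40}. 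The elementary bound $|d(j+l)-d(j)|\ge l\gamma_{4m}(j)$ then gives $|q_{n,l}(j)|\le\alpha_{4m}(j)/\gamma_{4m}(j)$ there. Lemmas~\ref{lem:41}--\ref{lem:42} combined with \eqref{3011} reduce the problem to estimating
$$\norm{\croch{P_n,\tilde A_n}}+\tfrac12\norm{\croch{P_n,A_n}}.$$

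Both commutators expand as sums of $\ord(m^2)$ elementary commutators $\croch{S^{l_1}f(\Lambda),S^{\pm l_2}g(\Lambda)}$, each of which reduces via $S^l f(\Lambda)=f(\Lambda-l)S^l$ to an operator $S^k h(\Lambda)$ with $|k|\le 2m$ and norm $\sup_j|h(j)|$. For $\norm{\croch{P_n,A_n}}$ a typical symbol is $h(j)=q_{n,l_1}(j+r_1)a_{n,l_2}(j+r_2)-q_{n,l_1}(j+r_1')a_{n,l_2}(j+r_2')$; I would expand the $q$'s and use the divided-difference identity
$$\frac{1}{d(j)-d(j+l)}-\frac{1}{d(j+r)-d(j+r+l)}=\frac{(d(j+r)-d(j+r+l))-(d(j)-d(j+l))}{(d(j)-d(j+l))(d(j+r)-d(j+r+l))},$$
whose numerator telescopes into at most $m^2$ second differences $(\Delta^2 d)(j+i)$ with $|i|\le 5m$, producing the pointwise bound $m\,\tilde\gamma_{5m}(j)\alpha_{4m}(j)^2/\gamma_{4m}(j)^2$---the second term of $\rho_n$. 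For $\norm{\croch{P_n,\tilde A_n}}$ the supports of $q_{n,l_1}$ and $\tilde a_{n,l_2}$ meet only in the transition layer $\{\tau_n\le|j-n|\le 2\tau_n\}$; the resulting $h$'s essentially become $q_{n,l_1}\cdot\Delta\tilde a_{n,l_2}$. Writing $\tilde a_{n,l}=a_l(1-\chi((\cdot-n)/\tau_n))$ and using $|\Delta\chi((\cdot-n)/\tau_n)|\le\norm{\chi'}_\infty/\tau_n\le 1/\tau_n$ yields exactly the majorant $\tilde\alpha_{n,4m}(j)$, and hence the first term $2\tilde\alpha_{n,4m}(j)\alpha_{4m}(j)/\gamma_{4m}(j)$ of $\rho_n$.

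All non-vanishing symbols have $|j-n|\le 2\tau_n+4m$ (the support of $a_{n,l}$, enlarged by the $\le 4m$ commutator shifts), which is why the supremum in \eqref{48} is over that range; summing the $\ord(m^2)$ elementary commutators with Schur factors absorbed and the $\tfrac12$ from Lemma~\ref{lem:41} gives the constant $15m^3$. The main obstacle, and where most of the work will go, is the careful shift-index bookkeeping needed to certify the precise windows $4m$ and $5m$ appearing in $\alpha_{4m},\gamma_{4m},\tilde\gamma_{5m}$ and to produce the exact constant; the underlying algebra, once $P_n$ is fixed, is long but mechanical.
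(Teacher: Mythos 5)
Your proposal is correct and follows essentially the same route as the paper's proof in Section~\ref{sec:6}: the same choice of $P_n$ (dividing $a_{n,l}$ by increments of $d$ to solve \eqref{309}), the same reduction via Lemma~\ref{lem:41} together with \eqref{3011} and the hypothesis \eqref{308}, and the same telescoping of the commutator symbols into first differences of the coefficients and second differences of $d$. The only slip is attributional: in the paper the term $2\tilde\alpha_{n,4m}(j)\alpha_{4m}(j)/\gamma_{4m}(j)$ already arises inside the bound for $\croch{P_n,A_n}$ itself (your symbol $h$ varies also through the numerators $a_{n,l}$, not only through the denominators $d(j)-d(j+l)$, and those variations are majorized by $\tilde\alpha_{n,4m}$), so crediting that term solely to $\croch{P_n,\tilde A_n}$ undercounts the $\croch{P_n,A_n}$ contribution, though this does not change the final majorant $\rho_n$.
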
 
%-------------------%

%-------------------%
\begin{proof}
This general estimate will be proved in Section~\ref{sec:6}.
\end{proof}
%-------------------%

%---------------------------------------------------------------------%
%:s.5.2.
%---------------------------------------------------------------------%
\subsection{Application}   \label{sec:52}

We check that Theorem~\ref{thm:51} implies

%-------------------%
%:thm 5.2
%-------------------%
\begin{theorem}   \label{thm:52}
Let $A$ be defined by \eqref{11}-\eqref{16}. Assume that there exist $C>0$, $\delta\in\D{R}$, $\kappa>0$ satisfying $\delta+\kappa>0$ and $n_0\in\D{N}$ such that
\begin{alignat}{2} \label{520}
C^{-1}n^{\delta+\kappa-1}\le (\Delta d)(n)&\leq Cn^{\delta+\kappa-1}&\;\;&\text{for }n\geq n_0,\\
\label{521}
(\Delta^2d)(n)&=\ord(n^{\delta+\kappa-2})&&\text{as }n\to\infty,
\\
\label{5220}
a_l(n)&=\ord(n^{\delta})&&\text{as }n\to\infty,\ l=1,\dots,m.\\
\label{522}
(\Delta a_l)(n)&=\ord(n^{\delta-1})&&\text{as }n\to\infty,\ l=1,\dots,m.
\end{alignat}
Then one has the estimate
\begin{equation} \label{480}
\lambda_n(A)=d(n)+\ord(n^{\delta-\kappa})\text{ as }n\to\infty.
\end{equation}
\end{theorem}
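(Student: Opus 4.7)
The plan is to derive Theorem~\ref{thm:52} as a direct application of Theorem~\ref{thm:51} once the auxiliary hypothesis \eqref{308} of that theorem is secured via Lemma~\ref{lem:42}. I would fix throughout the sequence $\tau_n \coloneqq \lfloor n/4 \rfloor$, which clearly satisfies \eqref{3020}--\eqref{3022}; the freedom in this choice will become visible in the size estimates for $\rho_n(j)$.

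First I would verify the two hypotheses of Lemma~\ref{lem:42}. Inequality \eqref{40} is immediate from \eqref{520}, which forces $(\Delta d)(n) > 0$ for $n\geq n_0$. For \eqref{41}, since $\tau_n - m \geq n/5$ for $n$ large, telescoping \eqref{520} over the interval $[n+m-\tau_n, n]$ yields
\[
d(n)-d(n+m-\tau_n)\;\geq\;C^{-1}\sum_{k=n+m-\tau_n}^{n-1}k^{\delta+\kappa-1}\;\gtrsim\;n\cdot n^{\delta+\kappa-1}=n^{\delta+\kappa},
\]
whereas by \eqref{5220} the left-hand side of \eqref{41} is $O(n^{\delta})$. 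Since $\delta+\kappa > \delta$, \eqref{41} holds for $n$ large enough and Lemma~\ref{lem:42} gives $d(n)=\lambda_n(D+\tilde A_n)$ for $n>\tilde n_0$. Theorem~\ref{thm:51} is therefore applicable.

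Next I would estimate $\rho_n(j)$ uniformly for $|j-n|\leq 2\tau_n+4m$. With our choice of $\tau_n$ we have $j\asymp n$, so \eqref{5220}, \eqref{522}, \eqref{520}, \eqref{521} yield
\[
\alpha_{4m}(j)=\ord(n^{\delta}),\quad
\gamma_{4m}(j)\gtrsim n^{\delta+\kappa-1},\quad
\tilde\alpha_{n,4m}(j)=\ord\bigl(n^{\delta}/\tau_n + n^{\delta-1}\bigr),\quad
\tilde\gamma_{5m}(j)=\ord(n^{\delta+\kappa-2}).
\]
Substituting these into \eqref{47}, the first term of $\rho_n(j)$ is
\[
\ord\!\left(\Bigl(\tfrac{n^{\delta}}{\tau_n}+n^{\delta-1}\Bigr)\cdot\frac{n^{\delta}}{n^{\delta+\kappa-1}}\right)
=\ord\!\left(\frac{n^{\delta-\kappa+1}}{\tau_n}+n^{\delta-\kappa}\right)=\ord(n^{\delta-\kappa}),
\]
using $\tau_n\asymp n$ in the last equality. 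The second term is
\[
\ord\!\left(n^{\delta+\kappa-2}\cdot\frac{n^{2\delta}}{n^{2(\delta+\kappa-1)}}\right)=\ord(n^{\delta-\kappa}).
\]
Hence $\sup_{|i|\leq 2\tau_n+4m}\rho_n(n+i)=\ord(n^{\delta-\kappa})$, and \eqref{48} immediately gives \eqref{480}.

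The only mildly delicate point is the choice $\tau_n\asymp n$: a smaller cutoff $\tau_n=o(n)$ would leave the residual term $n^{\delta-\kappa+1}/\tau_n$ dominating $n^{\delta-\kappa}$, so the optimization relies on pushing $\tau_n$ as large as the constraint $n-2\tau_n\to\infty$ allows. Beyond that, the proof reduces to the order-of-magnitude bookkeeping above and the routine telescoping estimate needed to verify \eqref{41}.
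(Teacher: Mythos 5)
Your proof is correct and follows essentially the same route as the paper's: secure \eqref{308} via Lemma~\ref{lem:42} and then feed the order-of-magnitude bounds on $\alpha_{4m}$, $\gamma_{4m}$, $\tilde\alpha_{n,4m}$, $\tilde\gamma_{5m}$ into the general estimate \eqref{48}. The only deviation is your choice $\tau_n=\lfloor n/4\rfloor$ together with a telescoping verification of \eqref{41}, versus the paper's $\tau_n=n+m-\lfloor n\varepsilon_0\rfloor$ and the direct bound \eqref{1170}; your variant is if anything cleaner, since it is manifestly compatible with the constraint $n-2\tau_n\to\infty$ in \eqref{3022}.
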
  
%-------------------%

%-------------------%
\begin{proof} 
Due to \eqref{520} and \eqref{521} there exist $C_0>c_0>0$ and $n_1\in\D{N}$ satisfying  
\begin{equation}  
c_0n^{\delta+\kappa}\leq d(n)\leq C_0n^{\delta+\kappa}\text{ for }n\geq n_1.
\end{equation} 
In order to ensure \eqref{308} we will check that the assumptions of Lemma~\ref{lem:42}  hold if $\tau_n=n+m-\lfloor n{\varepsilon}_0\rfloor$ where ${\varepsilon}_0>0$ is fixed sufficiently small. For this purpose we introduce $c_1:=c_0-C_0({\varepsilon}_0)^{\delta+\kappa}$ and estimate  
\begin{equation} \label{1170}
d(n)-d(\lfloor n{\varepsilon}_0\rfloor)\ge c_0n^{\delta +\kappa}-C_0(n{\varepsilon}_0)^{\delta+\kappa}=c_1n^{\delta +\kappa}\text{ for } n>n_1.
\end{equation} 
Let ${\varepsilon}_0>0$ be small enough to ensure $c_1>0$. Then it is clear that   \eqref{41} follows from \eqref{1170} and \eqref{5220}. 

Thus all assumptions of Lemma \ref{lem:42} hold and it remains to apply Theorem \ref{thm:51}. Using \eqref{520}-\eqref{522} we can find a constant $C_1$ such that
\begin{align}  \label{441}
\inf_{|i|\le 2\tau_n +4m} \gamma_{4m}(n+i)&\geq C_1^{-1}n^{\delta+\kappa-1},\\
\label{442}
\sup_{|i|\le 2\tau_n +4m} \tilde\gamma_{5m}(n+i) &\leq C_1n^{\delta+\kappa-2},\\
\label{443}
\sup_{|i|\le 2\tau_n +4m} \alpha_{4m}(n+i)&\leq C_1n^{\delta},\\
\label{444}
\sup_{|i|\le 2\tau_n +4m}\tilde\alpha_{n,4m}(n+i)&\leq C_1n^{\delta-1}
\end{align}
with $\gamma_s$, $\tilde\gamma_s$, $\alpha_s$, $\tilde\alpha_{n,s}$, given  
by \eqref{44}-\eqref{45}. Therefore,
\begin{align}  \label{445}
\tilde\alpha_{n,4m}(j)\frac{\alpha_{4m}(j)}{\gamma_{4m}(j)}&\leq C_1^3n^{\delta-1}\frac{n^{\delta}}{n^{\delta+\kappa-1}}=
C_1^3n^{\delta-\kappa},\\
\label{446}
\tilde\gamma_{5m}(j)\frac{\alpha_{4m}(j)^2}{\gamma_{4m}(j)^2}&\leq C_1^5n^{\delta+\kappa-2}\frac{n^{2\delta}}{n^{2(\delta+\kappa-1)}}=
C_1^5n^{\delta-\kappa}
\end{align}
hold when $|j-n|\le 2\tau_n +4m$ and it is clear that
\begin{equation}  \label{447}
\sup_{|i|\le 2\tau_n +4m}\rho_n(n+i)=\ord(n^{\delta-\kappa})
\end{equation}
if $\rho_n(j)$ is given by \eqref{47}. We conclude that \eqref{480} follows from \eqref{48} and \eqref{447}.
\end{proof}
%-------------------%

%---------------------------------------------------------------------%
%:s.5.3.
%---------------------------------------------------------------------%
\subsection{Proof of Theorem~\ref{thm:11}} \label{sec:53}

%-------------------%
\begin{proof}
Let $\delta_0=\delta+\kappa$. Then the assumptions (H1) and (H2) imply \eqref{520}-\eqref{522}. Consequently Theorem~\ref{thm:11} follows from Theorem~\ref{thm:52}.
\end{proof}  
%-------------------%

%---------------------------------------------------------------------%
%:s.5.4.
%---------------------------------------------------------------------%
\subsection{Other applications of the general estimate} \label{sec:54} 

In this section we consider $d(n)\sim\omega (n)$ where the function $\omega\colon(0,\infty)\to(0,\infty)$ is one of a special type of functions described below.  

%---------------------------------------------------------------------%
%:s.5.4.1.
%---------------------------------------------------------------------%
\subsubsection{} \label{sec:541} 
We fix $\kappa >0$, $\kappa'\in \D{R}$ and assume 
\begin{align} \label{460}
|a_l(n)|+n|\Delta a_l(n)|&=\ord(n^{-\kappa} (\ln n)^{-\kappa '}\omega (n)),\\ 
\label{461}
d(n)&=\omega (n)\left(1+\ord(n^{-2})\right)  
\end{align} 
 where  
\begin{equation} \label{462}
\omega (\lambda)= c_0 {\lambda}^{{\delta}_0} (\ln \lambda )^{{\delta}'_0}
\end{equation}  
holds with some $c_0>0$, ${\delta}_0>0$, ${\delta}'_0 \in \D{R}$. We observe that the derivatives satisfy ${\omega}^{(k)} (\lambda )\sim c_k {\lambda}^{-k}\omega(\lambda)$ as $\lambda\to\infty$. Since $\Delta \omega (n)=\omega'(n+r_n)$ holds with some $r_n\in[0,\,1]$ and $\omega '(n+r_n)\sim \omega '(n)$ as $n \to\infty$ we easily deduce
\begin{alignat}{2} \label{463}
{\gamma}_s(n)&\sim {\delta}_0n^{-1}\omega (n)&\quad&(n\to\infty),\\
\label{464}
{\gamma}'_s(n)&=\ord(n^{-2}\omega (n))&&(n\to\infty).
\end{alignat} 
Using $\tau_n=\lfloor n/4\rfloor$ we find that Theorem~\ref{thm:51} gives the estimate 
\begin{equation} \label{465}
{\lambda}_n(A)=d(n)\left({1+\ord(n^{-2\kappa} (\ln n)^{-2\kappa '})}\right).  
\end{equation} 
It is easy to see that \eqref{465} still holds when $\kappa =0$ and $\kappa ' >0$.
 
%---------------------------------------------------------------------%
%:s.5.4.2.
%---------------------------------------------------------------------%
\subsubsection{} \label{sec:541} 
We assume that \eqref{460} holds with some $\kappa>0$, $\kappa'\in\D{R}$, 
\begin{equation}  \label{461a}
d(n)=\omega (n)\,\left({1+\ord(n^{-2} (\ln\lambda )^{-1})}\right) 
\end{equation} 
where 
\begin{equation} \label{462a}
\omega (\lambda )=c_0 (\ln\lambda )^{{\delta}'_0} 
\end{equation}   
holds with some $c_0>0$, ${\delta}'_0>0$. Then computing the derivatives of $\omega$ we find 
\begin{alignat}{2} \label{463a}
{\gamma}_s(n)&\sim {\delta}_0n^{-1}(\ln n)^{-1}\omega (n)&\quad&(n\to\infty),\\
\label{464a}
{\gamma}'_s(n)&=\ord(n^{-2}(\ln n)^{-1}\omega (n))&&(n\to\infty);
\end{alignat}
and using $\tau_n=\lfloor n/4\rfloor$ in Theorem~\ref{thm:51} we obtain the estimate 
\begin{equation} \label{45b}
{\lambda}_n(A)=d(n)\left( {1+\ord(n^{-2\kappa}(\ln n)^{1-\kappa'})} \right). 
\end{equation}  
It is easy to see that \eqref{45b} still holds when $\kappa=0$ and $\kappa'>1$.
 
%---------------------------------------------------------------------%
%:s.5.4.3.
%---------------------------------------------------------------------%
\subsubsection{} \label{sec:541} 
We assume that $\kappa >0$, $0<\theta< 1$ and  
\begin{align} \label{460c}
|a_l(n)|  +  n^{1-\theta}|\Delta a_l(n)|&=\ord(n^{-\kappa}\omega(n)),\\ 
\label{461c}
d(n)&=\omega(n)\left({1+\ord(n^{\theta -2})} \right)
\end{align} 
where  
\begin{equation} \label{462c} 
\omega(\lambda)=c_0{\lambda}^{{\delta}_0}\,\eul^{c{\lambda}^{\theta}}
\end{equation}  
holds for some $c_0>0$, $c>0$, ${\delta}_0\in \D{R}$. The derivatives satisfy ${\omega}^{(k)}(\lambda )\sim c_k {\lambda}^{-k(1-\theta)}\omega (\lambda )$ as $\lambda\to\infty$ and we deduce 
\begin{alignat}{2} \label{463c}
{\gamma}_s(n)&\sim c\,\theta\,n^{-(1-\theta )}\omega (n)&\quad&(n\to\infty),\\
\label{464c}
{\gamma}'_s(n)&=\ord(n^{-2(1-\theta )}\omega (n))&&(n\to\infty).
\end{alignat}
Let $\tau_n=\lfloor n^{1-\theta}{\varepsilon}_0\rfloor$ with ${\varepsilon}_0>0$ small 
enough. Then $d(n)-d(n+m-\tau_n)\ge d(n)/2$ for $n\ge n_1$ and Theorem\ref{thm:51} ensures 
\begin{equation}  
{\lambda}_n(A)=d(n)\left({1+\ord(n^{-2\kappa})}\right).
\end{equation} 
%---------------------------------------------------------------------%
%:s.6.
%---------------------------------------------------------------------%
\section{Proof of Theorem \ref{thm:51}} \label{sec:6}
%---------------------------------------------------------------------%
%:s.6.1.
%---------------------------------------------------------------------%
\subsection{Estimates of commutators} \label{sec:61}

For $n\in\D{N}^*$, $l\in\D{Z}$ we consider $p_{n,l}$, $a_{n,l}\colon\D{Z}\to\D{R}$ satisfying
\begin{align} \label{331}
p_{n,-l}(j)&=p_{n,l}(j)\text{ and }a_{n,-l}(j)=a_{n,l}(j),\\
\label{332}
p_{n,l}(j)&=a_{n,l}(j)=0 \text{ when } j\leq 0,\\
\label{333}
p_{n,l}(j)&=a_{n,l}(j)=0 \text{ when } |l|>m,
\end{align}
where $m\in\D{N}^*$ is fixed. We assume moreover
\begin{equation} \label{334}
p_{n,l}(j)=0 \text{ when } |j-n|\ge 2\tau_n,
\end{equation}
where $(\tau_n)_{n=1}^{\infty}$ is as before and consider finite rank self-adjoint operators  
\begin{align}\label{335}
P_n&\coloneqq\sum_{1\leq l\leq m}\left(S^lp_{n,l}(\Lambda)+p_{n,l}(\Lambda)S^{l*} \right),\\
\label{336}
R_n&\coloneqq\ii[P_n,A_n]=\ii(P_nA_n-A_nP_n),  
\end{align} 
where $A_n$ is given by \eqref{305}. For $s\geq 0$, $j\in\D{Z}$ define
\begin{align} \label{337}
\alpha_{n,s}(j)&\coloneqq\max_{\substack{|l|\leq m\\|i|\leq s}}|a_{n,l}(j+i)|,\\
\label{338}
\beta_{n,s}(j)&\coloneqq\max_{\substack{|l|\leq m\\|i|\leq s}}|p_{n,l}(j+i)|,\\
\label{339}
\alpha_{n,s}'(j)&\coloneqq\max_{\substack{|l|\leq m\\|i|\leq s}}|(\Delta a_{n,l})(j+i)|,\\
\label{340}
\beta_{n,s}'(j)&\coloneqq\max_{\substack{|l|\leq m\\|i|\leq s}}|(\Delta p_{n,l})(j+i)|.
\end{align}

%-------------------%
%:lem 6.1
%-------------------%
\begin{lemma}  \label{lem:61}
Let $A_n$, $P_n$, $R_n$, $\alpha_{n,s}$, $\beta_{n,s}$,  $\alpha_{n,s}'$,
$\beta'_{n,s}$ be as above and
\begin{equation} \label{341}
 \rho_{n,s}(j)\coloneqq \alpha_{n,s}(j)\beta'_{n,s}(j)+
  \alpha_{n,s}'(j)\beta_{n,s}(j).
\end{equation}
Then one has
\begin{equation} \label{342}
\norm{R_n}\leq 10m^3\sup_{|i|\le 2\tau_n +4m} \rho_{n,4m}(n+i).
\end{equation}
\end{lemma}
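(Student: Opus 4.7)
The plan is to expand $R_n=\ii[P_n,A_n]$ by bilinearity into $4m^2$ elementary commutators
\[
\ii[P_{n,l}^{\varepsilon_1},\,A_{n,k}^{\varepsilon_2}],\qquad 1\le l,k\le m,\quad(\varepsilon_1,\varepsilon_2)\in\{+,-\}^2,
\]
where $P_{n,l}^{+}\coloneqq S^lp_{n,l}(\Lambda)$, $P_{n,l}^{-}\coloneqq p_{n,l}(\Lambda)S^{l*}$, and analogously for $A_{n,k}^{\pm}$. It then suffices to prove that every such elementary commutator has operator norm at most $m\sup_{|i|\le 2\tau_n+4m}\rho_{n,4m}(n+i)$, since $4m^2\cdot m=4m^3\le 10m^3$ (with slack built in).

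To estimate one elementary commutator, I would use the intertwining relations $S^lf(\Lambda)=f(\Lambda-l)S^l$ and $S^{l*}f(\Lambda)=f(\Lambda+l)S^{l*}$ to push all shifts to one side in each product $P_{n,l}^{\varepsilon_1}A_{n,k}^{\varepsilon_2}$. The commutator then collapses to a single operator $S^r\phi(\Lambda)$ or $\phi(\Lambda)S^{r*}$ with $|r|\le 2m$, whose coefficient has the schematic form $p(j\pm k)\,a(j)-a(j\pm l)\,p(j)$. Inserting $\pm p(j)a(j)$ and telescoping via $f(j+s)-f(j)=\sum_{i=0}^{s-1}(\Delta f)(j+i)$ gives, for instance,
\[
p(j+k)a(j)-a(j+l)p(j)=a(j)\sum_{i=0}^{k-1}(\Delta p)(j+i)-p(j)\sum_{i=0}^{l-1}(\Delta a)(j+i),
\]
so $|\phi(j)|\le k\,\alpha_{n,4m}(j)\beta'_{n,4m}(j)+l\,\alpha'_{n,4m}(j)\beta_{n,4m}(j)\le m\,\rho_{n,4m}(j)$. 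The remaining three sign patterns produce entirely analogous telescoping identities.

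Because $p_{n,l}$ vanishes for $|j-n|>2\tau_n$ by \eqref{334} and the shifts displace indices by at most $2m$, the coefficient $\phi$ of each elementary commutator is supported in $|j-n|\le 2\tau_n+4m$. Since $S^r$ and $S^{r*}$ are partial isometries, we have $\|S^r\phi(\Lambda)\|=\|\phi(\Lambda)S^{r*}\|=\sup_j|\phi(j)|$, so each elementary commutator contributes at most $m\sup_{|i|\le 2\tau_n+4m}\rho_{n,4m}(n+i)$ to $\|R_n\|$. The triangle inequality across the $4m^2$ summands yields the stated bound.

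The main technical nuisance I anticipate is that for the mixed pairings $[S^lp_{n,l},a_{n,k}S^{k*}]$ and $[p_{n,l}S^{l*},S^ka_{n,k}]$ the ``cross'' products $S^lS^{k*}$ and $S^{k*}S^l$ are not pure shifts on $l^2$: they differ from $S^{l-k}$ (or its adjoint) by a defect of rank $\min(l,k)\le m$ supported on $\mathrm{span}\{\vece_1,\dots,\vece_{\max(l,k)}\}$. However, $p_{n,l}(j)=0$ and $a_{n,k}(j)=0$ for $j\le n-2\tau_n$, and $n-2\tau_n\to\infty$; hence this boundary defect is annihilated once $n$ is large, and the intertwining identities may be applied without correction. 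Managing this case analysis uniformly (parametrised by the signs $\varepsilon_1,\varepsilon_2$) is the real bookkeeping task in the proof.
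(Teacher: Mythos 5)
Your proposal is correct and follows essentially the same route as the paper: both reduce $\norm{R_n}$ to a supremum of the banded matrix entries of the commutator, which are differences of products of $p_{n,\cdot}$ and $a_{n,\cdot}$ at arguments displaced by at most $m$, and both extract the gain by the same add-and-subtract/telescoping identity combined with the support condition \eqref{334} and a ``number of diagonals times sup of entries'' norm bound. The only organisational difference is that the paper groups terms by output diagonal ($r_{n,k}=\ii\sum_l r_{n,k,l}$, estimated via \eqref{350}) and, by working directly with the matrix elements \eqref{344}--\eqref{345}, never meets the $S^lS^{k*}$ boundary defect that you correctly dispose of by hand.
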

%-------------------%

%-------------------%
\begin{proof} 
We can express $R_n$ as
\begin{equation} \label{343}
R_n=r_{n,0}(\Lambda)+\sum_{1\leq k\leq 2m}\left(S^kr_{n,k}(\Lambda)+r_{n,k}(\Lambda)S^{k*}\right), 
\end{equation}
where for $k\geq 0$ we have
\begin{equation} \label{3435}
r_{n,k}(j)=\scal{\vece_{j+k},R_n\vece_j}=\ii\scal{\vece_{j+k},P_nA_n\vece_{j}}-\ii\scal{\vece_{j+k},A_nP_n\vece_j}.
\end{equation} 
For $s\in\D{R}$ we write $s_+=\max\{s,0\}$ and $s_-=(-s)_+$. Then 
\begin{align} \label{344}
\scal{\vece_i,P_n\vece_j}&=p_{n,i-j}(i-(i-j)_+),\\
\label{345}
\scal{\vece_i,A_n\vece_j}&=a_{n,i-j}(i-(i-j)_+),
\end{align}  
and using \eqref{344}, \eqref{345} in 
\begin{align*}
\scal{\vece_{j+k},P_nA_n\vece_j}
&=\sum_l\scal{\vece_{j+k},P_n\vece_{j+l}}\,\scal{\vece_{j+l},A_n\vece_j},\\
\scal{\vece_{j+k},A_nP_n\vece_j}
&=\sum_l\scal{\vece_{j+k},A_n\vece_{j+k-l}}\scal{\vece_{j+k-l},P_n\vece_j},
\end{align*}
we find 
\begin{align} \label{348}
r_{n,k}(j)&=\ii \sum_{1\leq |l|\leq m} r_{n,k,l}(j)
\shortintertext{with}
r_{n,k,l}(j)&=p_{n,k-l}(j+k-(k-l)_+ )a_{n,l}(j-l_-)\notag\\
&\quad-a_{n,l}(j+k-l_+ )p_{n,k-l}(j-(k-l)_-).\notag 
\end{align} 
Moreover $p_{n,k-l}\neq 0 \implies |k-l|\leq m$ and we claim that 
\begin{equation} \label{349}
r_{n,k,l}(j)\ne 0 \implies |j-n|\le 2\tau_n+2m .
\end{equation}  
Indeed, it suffices to use $0\leq k\leq 2m$ and $|k-l|\leq m$ in 
\begin{align*}
p_{n,k-l}(j-(k-l)_-)\ne 0&\implies n-2\tau_n\le 
   j-(k-l)_- \le n+2\tau_n,\\
p_{n,k-l}(j+k-(k-l)_+)\ne 0&\implies n-2\tau_n 
  \le j+k-(k-l)_+\le n+2\tau_n .
\end{align*} 
Then reasoning as in Section~\ref{sec:2} we can estimate  
\begin{equation} \label{350}
\norm{R_n}\leq ||r_{n,0}(\Lambda)||+\sum_{1\leq k\leq 2m}\left(2||r_{n,k}
(\Lambda)||^2+2||r_{n,k}(\Lambda-k)||^2\right)^{\!1/2}  
\end{equation}
and the right-hand side of \eqref{350} can be estimated by
\begin{equation} \label{351}
(4m+1)\,\sup_{j\geq 1}\,\max_{0\leq i\leq k\leq 2m}|r_{n,k}(j-i)|.
\end{equation}
However for $i,i'\in\D{Z}$ such that $i'<i$ we have the expression  
\begin{equation} \label{352}
a_{n,l}(j+i)-a_{n,l}(j+i')=\sum_{i'\leq j'\leq i-1}(\Delta a_{n,l})(j+j'),    
\end{equation}
and using $|k-l_++l_-|=|k-l|\leq m$ we obtain the estimate 
\begin{equation} \label{353}
|a_{n,l}(j+k-l_+ )-a_{n,l}(j-l_-)|\leq m \alpha_{n,s}'(j)
\end{equation} 
with $s=\max\{|l|,|k-l_+|\}\leq\max \{ m,\, k\}\leq 2m$. Thus \eqref{353} holds with $s=2m$ and
\begin{equation} \label{354}
|p_{n,k-l}(j+k-(k-l)_+ )-p_{n,k-l}(j-(k-l)_- )|\leq m\,\beta_{n,2m}'(j) 
\end{equation}
follows similarly. Since $r_{n,k}(j)=r_{n,k}'(j)+r_{n,k}''(j)$ holds with 
\begin{align}  \label{346}
r_{n,k}'(j)&=(p_{n,k-l}(j+k-(k-l)_+ )-p_{n,k-l}(j-(k-l)_-))a_{n,l}(j-l_-),\\ 
\label{347}
r_{n,k}''(j)&=p_{n,k-l}(j-(k-l)_-)(a_{n,l}(j-l_-)-a_{n,l}(j+k-l_+)),
\end{align}
we obtain $|r_{n,k}(j)|\leq m\,\rho_{n,2m}(j)$ and $\norm{R_n}$ can be estimated by
\begin{equation}
2m^2(4m+1)\,\sup_{|j-n|\le 2\tau_n +4m}\, 
\max_{0\leq i\leq 2m}|\rho_{n,2m}(j-i)|.
\end{equation}
To complete the proof of \eqref{342} it remains to use $\rho_{n,2m}(j-i)\leq\rho_{n,2m+|i|}(j)$.
\end{proof}
%-------------------%

%---------------------------------------------------------------------%
%:s.6.2.
%---------------------------------------------------------------------%
\subsection{Proof of Theorem \ref{thm:51}} \label{sec:62}

%-------------------%
\begin{proof} 
Let $n_0$ be as in \eqref{40} and  for $j\geq 1-l\geq n_0$ denote
\begin{equation} \label{422}
d'_l(j)\coloneqq d(j+l)-d(j).
\end{equation}
Consider $A_n$, $P_n$ given by \eqref{305}, \eqref{306} with $a_{n,l}$ as in 
\eqref{302} and  
\begin{equation} \label{423}
 p_{n,l}(j)=\scal{\vece_{j+l},\, P_n\vece_j} = 
 \ii\,\frac{a_{n,l}(j)}{d'_l(j)}\text{ for } l=1,\dots ,m.
\end{equation}
Then  $R_n\coloneqq[\ii P_n,\, D]$ coincides with $A_n$ due to
\begin{align*}
\ii\scal{\vece_{j+l},\, R_n\vece_{j}} 
 &=\ii\scal{\vece_{j+l},\,P_nD\vece_{j}}-\ii\scal{D\vece_{j+l},\, P_n\vece_j}\\
 &=\ii(d(j)-d(j+l))\scal{\vece_{j+l},\, P_n\vece_j}\\
 &=a_{n,l}(j)=\scal{\vece_{j+l},A_n\vece_{j}}\text{ for } l\geq 0.
\end{align*} 
Thus due to \eqref{3011} and \eqref{321}
\begin{equation} \label{424}
|\lambda_n(A)-d(n)|\leq ||[\tilde A_n,P_n]||+\frac{1}{2}\,||[A_n,P_n]||\text{ for } n\geq  {\tilde n}_0.
\end{equation}
We consider $\alpha_{n,s}$, $\beta_{n,s}$,  $\alpha_{n,s}'$, $\beta'_{n,s}$ given by \eqref{337}-\eqref{340} and in order to apply Lemma ~\ref{lem:61} we will check that 
\begin{equation} \label{425}
\rho_{n,4m}(j)\leq \rho_n(j) \text{ for } n\geq n_0+5m
\end{equation} 
holds with $\rho_n$ given by \eqref{47} and $\rho_{n,s}$ by \eqref{341}. Indeed, it is clear that 
\begin{align} \label{426}
\alpha_{n,s}(j)&\leq \alpha_s(j),\\
\label{427}
\beta_{n,s}(j)&\leq \beta_s(j)\coloneqq\frac{\alpha_s(j)}{\gamma_s(j)}\text{ for }j>n_0+m+s 
\end{align} 
hold with $\gamma_s$, $\alpha_s$ given by \eqref{43}, \eqref{44}. Then we observe that the function $\chi$ considered in Section~\ref{sec:41} can be chosen such that
\[
\norm{\chi'}_{\infty}\coloneqq\sup_{t\in\D{R}}\abs{\chi'(t)}\leq 2,
\]
where $\chi'$ denotes the derivative of $\chi$ and consequently
\[
\abs{\chi((j+1-n)/\tau_n)-\chi((j-n)/\tau_n)}\leq 2/{\tau_n},
\]
hence taking $\tilde\alpha_{n,s}$ as indicated in \eqref{46} we obtain 
\begin{equation} \label{428}
\alpha_{n,s}'(j)\leq\tilde\alpha_{n,s}(j)
\end{equation} 
due to \eqref{302}. Finally we need to estimate $\beta'_{n,s}(j)$ and for this purpose we first observe that 
\begin{align*}
j\geq n_0\ \&\ l>0&\implies d'_l(j)\geq d(j+1)-d(j)>0,\\
j\geq n_0-l\ \&\ l<0&\implies -d'_l(j)\geq d(j)-d(j-1)>0  
\end{align*}  
hold due to \eqref{40}. Further on we assume $j\geq n_0+m$. Thus for $|l|\leq m$ we have 
\begin{equation} \label{430}
\min \{ |d'_l(j)|,\, |d'_l(j+1)|\}\geq \gamma_0(j)  
\end{equation}
and writing $(\Delta d'_l)(j)=\sum_{i\in\C{I}(l)}\Delta^2d(j+i)$ where $\C{I}(l)=[0,\,l-1]\cap\D{Z}$ when $l>0$ and $\C{I}(l)=[l+1,\,0]\cap\D{Z}$ when $l<0$, we obtain
\begin{equation} \label{431}
|(\Delta d'_l)(j)|\leq |l|\tilde\gamma_l(j).
\end{equation}
Hence,
\begin{equation} \label{432}
\left| {\frac{1}{d'_l(j+1)}-\frac{1}{d'_l(j)}}\right|=\left|\frac{(\Delta d'_l)(j)}{d'_l(j+1)d'_l(j)}\right|
\leq\frac{|l|\tilde\gamma_l(j)}{\gamma_0(j)^2}.
\end{equation}
However by definition \eqref{423} we have 
\begin{equation} \label{433}
|\Delta p_{n,l}(j)|\leq\frac{|\Delta a_{n,l}(j)|}{|d'_l(j+1)|}+|a_{n,l}(j)|\,\left| {\Delta\left(\frac{1}{d'_l(j)}\right) }\right|,
\end{equation}
hence using $|l|\leq m$, \eqref{430} and \eqref{432} to estimate the right-hand side of \eqref{433} we obtain 
\begin{equation} \label{434}
\beta'_{n,s}(j)\leq \tilde{B}_{n,s}(j)\coloneqq\frac{\tilde\alpha_{n,s}(j)}{\gamma_s(j)} +\alpha_s(j)\,\frac{m\tilde\gamma_{m+s}(j)}{\gamma_s(j)^2} 
\end{equation}
for $n\geq n_0+m+s$. Now it is clear that \eqref{425} follows from \eqref{426}-\eqref{428} and \eqref{434}. Then using \eqref{425} in Lemma~\ref{lem:32} we obtain
\begin{equation} \label{436}
||[A_n,P_n]||\leq 10m^3\sup_{|i|\le 2\tau_n +4m}\rho_n(n+i)\text{ for } n\geq n_1.
\end{equation}
To complete the proof it remains to observe that one can use $\alpha_s$ and $\tilde\alpha_{n,s}$ as above if $\tilde{a}_{n,l}$ replaces $a_{n,l}$, hence the norm $||[\tilde A_n,P_n]||$ can be estimated in a similar manner. Thus we can conclude that the right-hand side of \eqref{424} can be estimated by the right-hand side of \eqref{48}.
\end{proof} 
%-------------------%

%---------------------------------------------------------------------%
\subsection*{Acknowledgments}

The second named author's research was partially supported by TODEQ MTKD-CT-2005-030042.
%---------------------------------------------------------------------%
%:bib
%---------------------------------------------------------------------%
\begin{bibdiv}
%---------------------------------------------------------------------%
\begin{biblist}
%---------------------------------------------------------------------%
\bib{BNS}{article}{
   author={Boutet de Monvel, Anne},
   author={Naboko, Serguei},
   author={Silva, Luis O.},
   title={The asymptotic behavior of eigenvalues of a modified
   Jaynes-Cummings model},
   journal={Asymptot. Anal.},
   volume={47},
   date={2006},
   number={3-4},
   pages={291--315},
}
\bib{BZ}{article}{
   author={Boutet de Monvel, Anne},
   author={Zielinski, Lech},
   title={Explicit error estimates for eigenvalues of some unbounded Jacobi matrices},
   conference={
      title={Spectral Theory, Mathematical System Theory, Evolution Equations, Differential and Difference Equations: IWOTA10},
   },
   book={
      series={Oper. Theory Adv. Appl.},
      volume={221},
      publisher={Birkh\"auser Verlag},
      place={Basel},
   },
   date={2012},
   pages={187--215},
}
\bib{BN1}{article}{
   author={Behncke, H.},
   author={Nyamwala, F. Oluoch},
   title={Spectral theory of difference operators with almost constant
   coefficients},
   journal={J. Difference Equ. Appl.},
   volume={17},
   date={2011},
   number={5},
   pages={677--695},
}
\bib{BN2}{article}{
   author={Behncke, H.},
   author={Nyamwala, F. Oluoch},
   title={Spectral theory of difference operators with almost constant
   coefficients II},
   journal={J. Difference Equ. Appl.},
   volume={17},
   date={2011},
   number={5},
   pages={821--829},
}
\bib{JN}{article}{
   author={Janas, Jan},
   author={Naboko, Serguei},
   title={Infinite Jacobi matrices with unbounded entries: asymptotics of
   eigenvalues and the transformation operator approach},
   journal={SIAM J. Math. Anal.},
   volume={36},
   date={2004},
   number={2},
   pages={643--658},
}
\bib{M1}{article}{
   author={Malejki, Maria},
   title={Asymptotics of large eigenvalues for some discrete unbounded
   Jacobi matrices},
   journal={Linear Algebra Appl.},
   volume={431},
   date={2009},
   number={10},
   pages={1952--1970},
}
\bib{M2}{article}{
   author={Malejki, Maria},
   title={Asymptotic behaviour and approximation of eigenvalues for
   unbounded block Jacobi matrices},
   journal={Opuscula Math.},
   volume={30},
   date={2010},
   number={3},
   pages={311--330},
}
\bib{R-S}{book}{
   author={Reed, Michael},
   author={Simon, Barry},
   title={Methods of modern mathematical physics. II. Fourier analysis,
   self-adjointness},
   publisher={Academic Press [Harcourt Brace Jovanovich Publishers]},
   place={New York},
   date={1975},
   pages={xv+361},
}
\bib{Sh}{article}{
   author={Shubin, M. A.},
   title={Pseudodifference operators and their Green function},
   journal={Izv. Akad. Nauk SSSR Ser. Mat.},
   volume={49},
   date={1985},
   number={3},
   pages={652--671},
}
\bib{Tes}{book}{
   author={Teschl, Gerald},
   title={Jacobi operators and completely integrable nonlinear lattices},
   series={Mathematical Surveys and Monographs},
   volume={72},
   publisher={American Mathematical Society},
   place={Providence, RI},
   date={2000},
   pages={xvii+351},
}
\bib{Tur}{article}{
   author={Tur, {\`E}. A.},
   title={Jaynes--{C}ummings model: solution without rotating wave approximation},
   journal={Optics and Spectroscopy},
   volume={89},
   date={2000},
   number={4},
   pages={574--588},
}
\bib{V}{article}{
   author={Volkmer, Hans},
   title={Error estimates for Rayleigh-Ritz approximations of eigenvalues
   and eigenfunctions of the Mathieu and spheroidal wave equation},
   journal={Constr. Approx.},
   volume={20},
   date={2004},
   number={1},
   pages={39--54},
}
\bib{Z}{article}{
   author={Zielinski, Lech},
   title={Eigenvalue asymptotics for a class of Jacobi matrices},
   conference={
      title={Hot topics in operator theory, Conference Proceedings, Timi\c{s}oara, June 29-July 4, 2006},
   },
   book={
      series={Theta Ser. Adv. Math.},
      volume={9},
      publisher={Theta, Bucharest},
   },
   date={2008},
   pages={217--229},
}
\end{biblist}
%---------------------------------------------------------------------%
\end{bibdiv}
%---------------------------------------------------------------------%
\end{document}